\def\pmatrix{\left(\begin{matrix}}
\def\endpmatrix{\end{matrix}\right)}
\def\G{{\mathbb G}}
\def\Z{{\mathbb Z}}
\def\C{{\mathbb C}}
\def\I{{\mathcal I}}
\def\de{\delta}
\def\p{\partial}
\def\t{\theta}
\def\T{\Theta}
\def\e{\varepsilon}
\def\a{\alpha}
\def\e{\epsilon}
\def\b{\beta}
\def\d{\delta}
\def\X{{\mathcal X}}
\def\A{{\mathcal A}}
\def\H{{\mathcal H}}
\def\G{{\mathcal G}}
\def\S{{\mathcal S}}
\def\X{{\mathcal X}}
\def\tch#1#2{{\left[\begin{matrix}{#1}\\ {#2}\end{matrix}\right]}}
\def\tt#1#2{{\t\tch{#1}{#2}}}
\def\Sp{\operatorname{Sp}(g,\Z)}
\def\Sing{\operatorname{Sing}}
\def\tn{\t_{\rm null}}
\def\ni{\noindent}
\def\grad{\operatorname{grad}}
\def\dim{\operatorname{dim}}
\def\codim{\operatorname{codim}}
\def\Ts{\T_{\rm sing}}
\theoremstyle{plain}
\newtheorem{thm}{Theorem}
\newtheorem{prop}[thm]{Proposition}
\newtheorem{conj}{Conjecture}
\theoremstyle{definition}
\newtheorem{rem}[thm]{Remark}
\newtheorem{que}[thm]{Question}
\title[High multiplicity points of the theta divisor]{The loci of abelian varieties with points of high multiplicity on the theta divisor}
\author{Samuel Grushevsky}
\address{Mathematics Department, Princeton University, Fine Hall,
Washington Road, Princeton, NJ 08544, USA.}
\email{sam@math.princeton.edu}
\thanks{Research of the first author is supported in part by National Science Foundation under the grant DMS-05-55867.}
\author{Riccardo Salvati Manni}
\address{Dipartimento di Matematica, Universit\`a ``La Sapienza'',
Piazzale A. Moro 2, Roma, I 00185, Italy}
\email{salvati@mat.uniroma1.it}
\dedicatory{To the memory of Sevin Recillas}
\date{\today}
\begin{document}
\begin{abstract}
We study the loci of principally polarized abelian varieties with points of high multiplicity on the theta divisor. Using the heat equation and degeneration techniques, we relate these loci and their closures to each other, as well as to the singular set of the universal theta divisor. We obtain bounds on the dimensions of these loci and relations among their dimensions, and make further conjectures about their structure.
\end{abstract}

\maketitle

\section{Introduction}
Let ${\A_g}$ be the moduli space of complex $g$-dimensional
principally polarized abelian varieties (ppavs). Denote $\pi':\X_g\rightarrow {\A_g}$ the universal family of ppavs over it. Let $\T_g\subseteq \X_g$, and $\pi:\T_g\rightarrow{\A_g}$ denote
the universal theta divisor. We shall omit the index $g$ when it is clear.\smallskip

We can identify $\X$ and $\A$ respectively  with the spaces
$$
  \Sp\times\Z^{2g}\backslash\H_g\times\C^g,\quad \Sp\backslash\H_g,
$$
where $\H_g$ is the Siegel upper half-space. For $\tau\in{\H}$,  we denote by $X_{\tau}$, resp. $\T_\tau$, the fiber of $\pi'$, resp. $\pi$, over $\tau$ (more precisely, over the image of $\tau$ in $\A$.

A symmetric principal polarization $\T$ is the zero set of the holomorphic function (where $^tn$ stands for the transpose of a vector $n$)
$$
 \vartheta(\tau, z)=\sum_{n\in \Z^g}\exp(i\pi ({}^tn\tau n+2{}^tnz),
$$
and all the other principal polarization divisors symmetric under the $\pm 1$ involution of $X_\tau$ are obtained from this divisor by translating by points of order two $\frac{\tau\e+\de}{2}\in X_\tau[2]\subset X_\tau$, for some $\e,\de\in (\Z/2\Z)^g$.

We denote by $\Ts\subseteq\T$  the singular  locus on $\T$, i.e. the set of the classes of the points $(\tau, z)$ defined by
$$
 \Ts:=\lbrace (\tau,z)\in\H_g\times\C^g\mid \vartheta(\tau, z)=\p_i\vartheta(\tau, z)=\p_i\p_j\vartheta(\tau, z) =0\rbrace
$$
(where from here on we denote by $\p_i=\frac{\p}{\p z_i}$ the partial derivative in the $z_i$ direction). Moreover we  denote by $\S$  the union of the singular points of $\T_{\tau}$, i.e. the set of  the classes of the points $(\tau, z)$ defined by
$$
 \S:=\lbrace (\tau,z)\in\H_g\times\C^g\mid \vartheta(\tau, z)= \p_i\vartheta(\tau, z)=0\rbrace.
$$
Obviously we have
$$
  \Ts\subset\S\subset \T.
$$
There has been great interest in the singularities of the theta divisor and the loci of ppavs for which the theta divisor is singular at least since the ground-breaking work of Andreotti and Mayer \cite{am}, who defined what are now called the Andreotti-Mayer loci
$$
 N_k:=\pi_*(\S)=\{(X_{\tau}, \T_{\tau})\in{\A}\mid \dim\Sing\T_{\tau}\ge k\}.
$$
It is known that $N_0\subset\A$ is a divisor, which has at most two components, cf. \cite{mu,bea,deb}:
$$
 \t_{\rm null}:=\{ (X_{\tau}, \T_{\tau})\in{\A}\mid X_\tau[2]^{\rm even}\cap\Sing  \T_{\tau}\ne\emptyset\}
$$
$$
 N_0' :={\rm the\ closure\ of\ }\{ (X_{\tau}, \T_{\tau})\in{\A}\mid (X_\tau\setminus X_\tau[2]^{\rm even})\cap\Sing  \T_{\tau}\ne\emptyset\}
$$
(where $X_\tau[2]^{\rm even}$ denotes the even points of order two). The intersection of these two components was studied in \cite{deb,sing}.

In general the dimensions of the loci $N_k$ are not known. They were studied in detail by Ciliberto and van der Geer in \cite{cilvdg},\cite{amsp}, who conjecture that within the locus of ppavs with endomorphism ring $\Z$ (i.e. essentially with Picard group $\Z$) the codimension of any component of $N_k$ is equal to at least $\frac{(k+1)(k+2)}{2}$. They prove that for $g\ge 4$ and $1\le k\le g-3$ one has $\codim N_k\ge k+2$.

In this paper we will be interested in the loci of ppavs for which the theta divisor has points of higher multiplicity. For points of multiplicity two the natural loci to consider are
$$
 \begin{aligned}G_k:=
 &\{(X_{\tau}, \T_{\tau})\in{\A}\mid \dim\{z\in X_\tau : {\rm mult}_z\T_{\tau}\ge 2\}\ge k\}\\
=&\{(X_{\tau}, \T_{\tau})\in{\A}\mid
 \dim (X_\tau\cap\Ts)\ge k\}\end{aligned}
$$
(the two definitions are equivalent by the heat equation).

Already the locus $G_0$ is still a rather unknown object, and we will mostly concentrate on studying it. It has a natural subset
$$
 (\p\t)_{\rm null}:=\{ (X_{\tau}, \T_{\tau})\in{\A}\mid \X_{\tau}[2]^{\rm odd}\cap\Ts\ne\emptyset\}.
$$
We can further generalize this  to define
$$
 (\p^k\t)_{\rm null}:=\{ (X_{\tau}, \T_{\tau})\in{\A}\mid \exists x\in \X_{\tau}[2]: {\rm mult}_x\T_\tau- k\in 2\Z_{>0}\}
$$
For some computations it will be important to keep track of which dimension we are in. In this case we will use an upper index $(g)$ and write $\Ts^{(g)},(\p\t)_{\rm null}^{(g)},$ etc.

\smallskip
In this paper we give equations for  $(\p\t)_{\rm null}$ and $(\p^2\t)_{\rm null}$  using modular forms and so we can say that we solve Schottky's problem  for them. We describe explicitly some irreducible components of $(\p^k\t)_{\rm null}$, and thus obtain an estimate for their codimension in $\A_g$. Moreover we shall prove that these loci are reducible for $1\leq k\leq g-4$ (from \cite{el} it follows that $(\p^{g-2}\t)_{\rm null}$ is irreducible). Finally  we shall give some evidence for the expected dimensions of  $(\p\t)_{\rm null}$,  $G_0$ and $(\p^2\t)_{\rm null}$.  Doing this we  will relate the dimension of these three  varieties with the dimension of $\Ts$.

The methods we use consist mostly of working with theta functions and their derivatives, computing and bounding the dimensions of the tangent spaces by the ranks of explicit matrices of derivatives. We use the heat equation in many places, and compute the intersections of the loci we are interested in with the boundary of the partial compactification of $\A$. In \cite{amsp} Ciliberto and van der Geer study primarily the dimensions of the loci $N_k$, while we are mostly interested in the dimensions of $G_k$ and $(\p^k\t)_{\rm null}$. Perhaps uniting the two approaches may yield further insights into the geometry of the theta divisor.

\section{Notations}
We start by recalling  some basic facts about theta functions and modular forms. We denote $\H_g$ the {\it Siegel upper
half-space}, i.e. the set of symmetric complex $g\times g$ matrices
$\tau$ with positive definite imaginary part. Each such $\tau$
defines a complex   abelian variety   $\C^g/\tau\Z^g+\Z^g$. If $\sigma=\pmatrix a&b\\ c&d\endpmatrix\in\Sp$ is a symplectic matrix in a $g\times g$ block form, then its action on $\tau\in\H_g$ is defined by $\sigma\cdot\tau:=(a\tau+b)(c\tau+d)^{-1}$, and the moduli space of ppavs is the quotient $\A_g=\H_g/\Sp$. A period matrix $\tau$ is called {\it decomposable} if there exists $\sigma\in\Sp$ such that
$$
 \sigma\cdot\tau=\pmatrix \tau_1&0\\
 0&\tau_2\endpmatrix,\quad\tau_i\in\H_{g_i},\ g_1+g_2=g, g_i>0;
$$
otherwise we say that $\tau$ is indecomposable.

For $\e,\de\in (\Z/2\Z)^g$, thought of as vectors of zeros and ones,
$\tau\in\H_g$ and $z\in \C^g$, the {\it theta function with
characteristic $[\e,\de]$} is
$$
 \tt\e\de(\tau,z):=\sum\limits_{m\in\Z^g} \exp \pi i \left[
 ^t(m+\frac{\e}{2})\tau(m+\frac{\e}{2})+2\ ^t(m+\frac{\e}{2})(z+
 \frac{\de}{2})\right].
$$
We write  $\vartheta(\tau, z)$ for the theta function with
characteristic $[0,0]$. Observe that
$$
 \tt{0}{0}\left(\tau,z+\tau\frac\e2+\frac\de2\right)= \exp \pi i
 \left(-\frac{^t\e}{2}\,\tau\frac{\e}{2}\,\,-\frac{^t\e}{2}\,(z+
 \frac{\de}{2})\right)\tt\e\de(\tau,z),
$$
i.e. theta functions with characteristics are, up to some non-zero
factor, equal to $\vartheta(\tau,z)$ shifted by points of order two.

A {\it characteristic} $[\e,\de]$ is called {\it even} or {\it odd}
depending on whether  the scalar product
$\e\cdot\de\in\Z/2\Z$ is zero or one, respectively. The function
$\tt\e\de(\tau,z)$ is even or odd as a function
of $z$, according to the parity of the characteristic; thus a characteristic $[\e,\de]$ is even (resp. odd) if the multiplicity of the theta function $\vartheta(\tau,z)$ at the point $z=(\tau\e+\de)/2$ is even (resp. odd). We denote by $X_\tau[2]^{\rm even/odd}$ the set of even/odd points of order two on $X_\tau$.

A {\it theta constant} is the evaluation at $z=0$ of a theta function. All odd theta constants of course vanish identically in $\tau$, but their first derivatives at zero
do not vanish identically, and in fact transform in a nice way under the $\Sp$ action.

\smallskip
For a finite index subgroup $\Gamma\subset\Sp $ a multiplier system of weight $r/2$ is a map $v:\Gamma\to \C^*$, such that the map
$$
  \sigma\mapsto v(\sigma)\det(c\tau+d)^{r/2}
$$
satisfies the cocycle condition for every $\sigma\in\Gamma$ and
$\tau\in\H_g$.

\smallskip
Given a pair $\rho=(\rho_0,r)$, where $r$ is half integral, and $\rho_0:{\rm GL}(g,\C)\to \operatorname{End} V$ is an irreducible rational representation with the highest weight $(k_1,k_2,\dots,k_g)$, $k_1\geq k_2 \geq\dots\geq k_g=0$, we use the notation
$$
  \rho(A)=\rho_0(A)\det A^{r/2}\ .
$$

A map $f:\H_g\to V$ is called a modular form for $\rho$,
or simply a {\it vector-valued modular form}, if the choice
of $\rho$ is clear, {\it with multiplier $v$}, with respect to a
finite index subgroup $\Gamma\subset\Sp$ if
\begin{equation}\label{transform}
  f(\sigma\cdot\tau)=v(\sigma)\rho(c\tau+d)f(\tau)\qquad\forall
  \tau\in\H_g,\forall\sigma\in\Gamma,
\end{equation}
and if additionally $f$ is holomorphic at all cusps of
$\H_g/\Gamma$.

We define the {\it level}  and {\it Igusa}'s subgroups of the symplectic group to be
$$
  \Gamma_g(n):=\left\lbrace \sigma=\pmatrix a&b\\ c&d\endpmatrix
  \in\Sp\, |\, \sigma\equiv\pmatrix 1&0\\
  0&1\endpmatrix\ {\rm mod}\ n\right\rbrace
$$
$$
  \Gamma_g(n,2n):=\left\lbrace \sigma\in\Gamma_g(n)\, |\, {\rm
  diag}(a^tb)\equiv{\rm diag} (c^td)\equiv0\ {\rm mod}\
  2n\right\rbrace.
$$
When $n$ is even, these are finite index normal subgroups of $\Sp$.

\smallskip
\ni Under the action of $\sigma\in\Sp$ the theta functions transform
as follows:
$$
  \t\bmatrix \sigma\pmatrix \e\\ \de\endpmatrix\endbmatrix
  (\sigma\cdot\tau,\,^{t}(c\tau+d)^{-1}z)\qquad\qquad\qquad
$$
$$
  \qquad\qquad\qquad=\phi(\e,\,\de,\,\sigma,\,
  \tau,\,z)\det(c\tau+d)^{\frac{1}{2}}\tt\e\de(\tau,\,z),
$$
where
$$
  \sigma\pmatrix \e\cr \de\endpmatrix :=\pmatrix d&-c\cr
  -b&a\endpmatrix\pmatrix \e\cr \de\endpmatrix+ \pmatrix {\rm
  diag}(c \,^t d)\cr {\rm diag}(a\,^t b)\endpmatrix,
$$
considered in $(\Z/2\Z)^g$, and $\phi(\e,\,\de,\,\sigma,\,\tau,\,z)$ is some complicated explicit function. For more details, we refer to
\cite{igbook}. Thus theta constants with characteristics are (scalar) modular forms of weight $1/2$ with multiplier with respect to $\Gamma_g(2)$, i.e. we have
$$
  \tt\e\de(\sigma\cdot\tau,0)=v(\sigma, \e, \de) \det(c\tau+d)^{1/2}\tt\e\de(\tau,0)
  \qquad \forall \sigma\in\Gamma_g(2).
$$
where the multiplier $v$ becomes trivial if we assume  $\sigma\in\Gamma_g(4,8)$.

\smallskip
We call the {\it theta-null divisor} $\tn\subset\A_g$ the
zero locus of the product of all even theta constants. We  denote  by $\grad\tt\e\de(\tau)$ the gradient of the theta function of characteristic $[\e,\,\de]$  with respect to $ z_1,\dots,z_g$ and evaluating at $z=0$. This gradient is not identically zero if and only if the characteristic is odd. The gradient is a vector valued modular form  for $\rho=(St, 1/2)$  with multiplier $v$, cf.\cite{gs}.

We recall that theta functions  (and their derivatives) satisfy the heat equation, i.e.
$$
 \frac{\p^2\tt\e\de(\tau,z)}{\p z_j\p z_k} =2\pi
 i(1+\delta_{j,k})\frac{\p\tt\e\de(\tau,z)}{\p\tau_{jk}},
$$
(where $\delta_{j,k}$ is Kronecker's symbol).

\smallskip
The   symmetric matrix  associated to the second derivatives
\begin{equation}\label{2z}
 \left(\frac{\p^2\tt\e\de(\tau,z)}{\p z_j\p z_k}|_{z=0}\right)
\end{equation}
is a  vector valued modular form for $\Gamma_g(2)$ if we restrict  to the locus $\tt\e\de(\tau,0)=0$.
This is a general fact: the derivative of a section of a line bundle is a section of the same bundle when restricted to the zero locus of the section --- the modularity of this particular gradient is discussed in \cite{genus4}. Similarly note that third derivatives of a theta function with an odd characteristics for a modular form,
when restricted to the  locus   $\grad\tt\e\de(\tau,0)=0$.

\section{Equations for the loci $(\p^k\t)_{\rm null}$}
Similarly to the case of $\tn$, we give vector valued (and alternatively, scalar valued) equations for the loci
$(\p\t)_{\rm null}$ and  $(\p^2\t)_{\rm null}$. We will work on the level covers $\A_g(2):=\H_g/\Gamma_g(2)$ or
$\A_g(4, 8):=\H_g/\Gamma_g(4, 8)$, where the $\tn$ divisor decomposes into a union of components corresponding to the individual characteristics. Note also that  $\A_g(4,8)$ is a smooth manifold cover of the stack/orbifold $\A_g$, so working on the level cover takes care of the stackiness.

On $\A_g(2)$ for any odd $[\e, \d]$ the vector valued equation
$\grad\tt \e\d(\tau) =0$ defines a certain set of components of $(\p\t)_{\rm null}.$ The (possibly reducible) loci $\grad\tt \e\d(\tau) =0$ for various $\e,\d$ are conjugate under the action of $\Sp$.

For any $[\e_1,\,\d_1],\dots,[\e_g,\,\d_g]$ we define
$$
 D([\e_1,\,\d_1],\dots,[\e_g,\,\d_g])(\tau):=\grad \tt{\e_1}{\d_1}\wedge\grad\tt{\e_2}{\d_2}\wedge \dots\wedge \grad\tt{\e_g}{\de_g}(\tau)
$$
which is a scalar modular form with multiplier of weight $\frac{g+2}{2}$ with respect to $\Gamma_g(2)$, cf. \cite{sm}.

It follows from Lefschetz theorem  for abelian varieties, cf. \cite{gs}, that the $g\times 2^{g-1}(2^g -1)$ matrix
$$
 \left(\dots\grad\tt \e\d(\tau)\dots\right)_{{\rm all\, odd} [\e,\,\de]}
$$
has maximal rank for all $\tau$. Thus if all its minors including a fixed characteristic $[\e,\d]$ vanish, the corresponding gradient must be zero, and we get
\begin{prop}
The common zero locus of the scalar modular forms
$$
 D([\e,\,\de],[\e_2,\,\de_2], \dots([\e_g,\,\de_g](\tau)
$$
for $[\e,\de]$ fixed, and all possible odd characteristics $[\e_2,\,\d_2],\dots, [\e_g,\,\d_g]$,
is equal to the locus $\grad\tt \e\d(\tau) =0$.
\end{prop}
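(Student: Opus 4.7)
My plan is to unpack the definition of $D$ as a $g\times g$ determinant and then reduce the proposition to a linear algebra fact about the finite collection of odd gradients, for which the Lefschetz rank statement quoted just above is the key input. By definition, $D([\e,\de],[\e_2,\de_2],\dots,[\e_g,\de_g])(\tau)$ is the determinant of the $g\times g$ matrix whose columns are $v_0:=\grad\tt\e\de(\tau)$ and $v_i:=\grad\tt{\e_i}{\de_i}(\tau)$ for $i=2,\dots,g$, so $D$ vanishes at $\tau$ precisely when $v_0,v_2,\dots,v_g$ are linearly dependent in $\C^g$. One inclusion is then immediate: if $v_0=0$, every such determinant vanishes identically.

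For the nontrivial inclusion, I fix $\tau$, assume that $D([\e,\de],[\e_2,\de_2],\dots,[\e_g,\de_g])(\tau)=0$ for every choice of odd characteristics $[\e_2,\de_2],\dots,[\e_g,\de_g]$, and argue by contradiction, supposing $v_0\ne 0$. My goal is to exhibit $g-1$ odd characteristics distinct from $[\e,\de]$ whose gradients together with $v_0$ form a basis of $\C^g$. Let $W\subseteq\C^g$ denote the linear span of the set $\{\grad\tt{\e'}{\de'}(\tau):[\e',\de']\text{ odd},\,[\e',\de']\ne[\e,\de]\}$, and recall from the Lefschetz fact cited just above that $\langle v_0\rangle+W=\C^g$.

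I split into two cases. If $v_0\in W$, then $W=\C^g$, and since $v_0\ne 0$ I may select $g-1$ elements from the defining set of $W$ whose images form a basis of the $(g-1)$-dimensional quotient $\C^g/\langle v_0\rangle$; these, together with $v_0$, span $\C^g$. If instead $v_0\notin W$, then necessarily $\dim W=g-1$, and any $g-1$ linearly independent gradients from the defining set of $W$ together with $v_0$ give a basis. In either case I produce odd $[\e_2,\de_2],\dots,[\e_g,\de_g]$ with $D([\e,\de],[\e_2,\de_2],\dots,[\e_g,\de_g])(\tau)\ne 0$, contradicting the hypothesis, so $v_0=0$. There is no serious obstacle in this argument; the only point meriting verification is that in the first case the basis extension can actually be realized using members of the finite set of odd gradients, which is exactly the content of the Lefschetz rank equality.
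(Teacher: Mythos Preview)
Your proof is correct and follows the same approach as the paper. The paper's argument is the single sentence preceding the proposition: since the full $g\times 2^{g-1}(2^g-1)$ matrix of odd gradients has rank $g$ by Lefschetz, the vanishing of all $g\times g$ minors containing the fixed column $\grad\tt\e\de(\tau)$ forces that column to be zero. You have simply spelled out this linear-algebra step in detail; your case split on whether $v_0\in W$ is an unnecessary but harmless elaboration of the standard basis-exchange argument.
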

\begin{prop}
Similarly, if $[\e, \d]$ is an even characteristic,
the common zero locus of the scalar valued equation
\begin{equation}\label{theta}
  \tt \e\d(\tau)=0
\end{equation}
and the vector valued equations
\begin{equation}\label{theta2}
 \left((1+\d_{i, j})\frac{\p^2\tt\e\de}{\p\tau_{i\,j}}\tt\a\b- (1+\d_{i,j})\frac{\p^2\tt\a\b}{\p\tau_{i\,j}}\tt\e\de\right)(\tau)=0
\end{equation}
for all even characteristics  $[\a,\, \b]$, defines a union of some irreducible components of  $(\p^2\t)_{\rm null}$ on $\A_g(2)$. (To see this, one notes that all even theta constants cannot vanish simultaneously, cf. \cite{genus4}.)
\end{prop}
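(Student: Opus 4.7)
\emph{Proof proposal.} The plan is to identify the common zero locus of equations (\ref{theta}) and (\ref{theta2}) on $\A_g(2)$ with the sublocus
$$
V_{[\e,\d]} := \{\tau\in\H_g \,:\, \operatorname{mult}_{(\tau\e+\d)/2}\T_\tau \geq 4\}.
$$
Since by its very definition $(\p^2\t)_{\rm null}$ is the union $\bigcup_{[\e,\d]\text{ even}} V_{[\e,\d]}$ over all even characteristics, once this identification is established it realises the common zero locus as a closed subvariety of $(\p^2\t)_{\rm null}$ accounting for a union of some of its irreducible components.

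The first step will be to translate the multiplicity condition into the shape of (\ref{theta}) and (\ref{theta2}). As $[\e,\d]$ is even, $\tt\e\d(\tau,z)$ is an even function of $z$, so every odd-order $z$-derivative at $z=0$ vanishes identically. Consequently, $\tau\in V_{[\e,\d]}$ is equivalent to the simultaneous vanishing of $\tt\e\d(\tau)$ and of the entire symmetric matrix of second $z$-derivatives $\bigl(\p^2\tt\e\d/\p z_i\p z_j(\tau,0)\bigr)_{i,j}$. The heat equation then identifies this matrix, up to a non-zero constant, with the matrix $\bigl((1+\d_{ij})\p^2\tt\e\d/\p\tau_{ij}(\tau)\bigr)_{i,j}$ appearing in (\ref{theta2}).

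The inclusion of $V_{[\e,\d]}$ into the common zero locus will then be immediate, since at any $\tau\in V_{[\e,\d]}$ every factor in each summand of (\ref{theta2}) already vanishes. The reverse inclusion is the essential content of the proposition. At a $\tau$ lying in the common zero locus, (\ref{theta}) gives $\tt\e\d(\tau)=0$, so (\ref{theta2}) collapses to the relations
$$
(1+\d_{ij})\,\frac{\p^2\tt\e\d}{\p\tau_{ij}}(\tau)\,\tt\a\b(\tau) \;=\; 0
$$
for every even characteristic $[\a,\b]$ and every pair $i,j$. The only real step in the argument is to pass from these relations to the vanishing of the first factor, and this is precisely where the cited observation from \cite{genus4} is essential: since at any $\tau\in\H_g$ some even theta constant is non-zero, one may pick an even $[\a,\b]$ with $\tt\a\b(\tau)\neq 0$ and divide through, concluding that $(1+\d_{ij})\p^2\tt\e\d/\p\tau_{ij}(\tau) = 0$ for every $i,j$. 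The heat equation then transports this back to the vanishing of every second $z$-derivative at $z=0$, placing $\tau$ in $V_{[\e,\d]}$ and completing the argument. There is no further obstacle; the whole proof reduces to this single division step, unlocked by the non-simultaneous vanishing of the even theta constants.
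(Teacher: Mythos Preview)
Your argument is correct and follows exactly the route the paper indicates in its parenthetical remark: you make explicit the identification of the common zero locus with the locus $V_{[\e,\d]}$ where $\tt\e\d$ vanishes to order at least~$4$ at $z=0$, using the heat equation and the non-simultaneous vanishing of the even theta constants (from \cite{genus4}) for the key division step. The paper gives no further detail beyond that one-sentence hint, so your writeup is simply a faithful expansion of the same idea.
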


Note that of course the components of $(\p^2\t)_{\rm null}$ on $\A_g(2)$ are given by
\begin{equation}\label{theta2bis}
 \left((1+\d_{i,j})\frac{\p^2\tt\e\de}{\p\tau_{i\,j}}\right)(\tau)=0;
\end{equation}
however, this expression is a modular form only along the divisor $ \tt \e\d(\tau,0)=0$, cf. \cite{genus4}.

There is also another method to produce scalar valued modular forms  vanishing exactly on the  component of  $(\p^2\t)_{\rm null}$. This method is  similar to the method used for  $(\p\t)_{\rm null}$. Indeed, the $2^{g-1}(2^g+1)\times\left(\frac{g(g+1)}{2}+1\right)$ matrix
$$
\left(\begin{array}{ccc}
  \dots &\tt \e\d&\dots \\
\dots&\dots&\dots\\
 \dots& {\p\tt\e\de}/{\p\tau_{i\, j}}&\dots
  \end{array}\right)_{{\rm all\, even} [\e,\,\de]}
$$
has maximal rank, cf.\cite{igbook}. Hence, setting $N=(1/2)g(g+1)$, the form
$$
 D^2([\e,\,\de],[\e_1,\,\de_1]\dots[\e_N,\,\de_N])(\tau):=
$$
$$
 {\rm det}
 \left(\begin{array}{llll}
  \ \ \tt \e\d&\ \ \tt{\e_1}{\de_1}&\dots&\ \ \tt{\e_N}{\de_N}\\
 {\p\tt\e\de}/{\p\tau_{1\, 1}}&{\p\tt{\e_1}{\d_1}}/{{\p \tau_{1\, 1}}}&\dots  &{\p\tt{\e_N}{\d_N}}/{{\p \tau_{1\, 1}}}\\
\dots&\dots&\dots&\dots\\
 {\p\tt\e\de}/{\p\tau_{i\, j}}&{\p\tt{\e_1}{\d_1}}/{{\p \tau_{i\, j}}}&\dots  &{\p\tt{\e_N}{\d_N}}/{{\p \tau_{i\, j}}}\\
\dots&\dots&\dots&\dots\\
 {\p\tt\e\de}/{\p\tau_{g\, g}}&{\p\tt{\e_1}{\d_1}}/{{\p \tau_{g\, g}}}&\dots  &{\p\tt{\e_N}{\d_N}}/{{\p \tau_{g\, g}}}\\
  \end{array}\right)(\tau) ,
$$
is a modular form with multiplier, of weight $g+1+(1/2)(N+1)$ relatively to $\Gamma_g(2)$, cf. \cite{diff}, and similarly to the previous case we have
\begin{prop}
The modular forms
$$
 D^2([\e,\,\de],[\e_1,\,\de_1]\dots[\e_N,\,\de_N])(\tau)
$$
for $[\e,\de]$ and all possible even characteristics $[\e_1,\,\d_1],\dots, [\e_N,\,\d_N]$, vanish  simultaneously along the locus defined by (\ref{theta}) and (\ref{theta2bis}), and give scalar equations for one  component of the locus $(\p^2\t)_{\rm null}$.
\end{prop}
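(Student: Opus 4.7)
The plan is to prove that the common vanishing locus of the forms $D^2([\e,\de],[\e_1,\de_1],\ldots,[\e_N,\de_N])(\tau)$, as the $[\e_i,\de_i]$ range over all even characteristics with $[\e,\de]$ fixed, coincides with the locus cut out by (\ref{theta}) and (\ref{theta2bis}). Combined with the preceding proposition and the heat equation, this identifies the common vanishing with the component of $(\p^2\t)_{\rm null}$ associated to the two-torsion point $(\tau\e+\de)/2$.

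One inclusion is immediate: if $\tt\e\de(\tau)=0$ and all $\p\tt\e\de/\p\tau_{ij}(\tau)=0$, then the first column of the matrix defining $D^2([\e,\de],[\e_1,\de_1],\ldots,[\e_N,\de_N])(\tau)$ vanishes identically, so the determinant vanishes for every choice of the remaining characteristics.

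For the reverse inclusion I would invoke the maximal rank statement from \cite{igbook}: at every $\tau\in\H_g$, the $2^{g-1}(2^g+1)\times(N+1)$ matrix whose row indexed by an even characteristic $[\eta,\mu]$ consists of $\tt\eta\mu(\tau)$ followed by the entries $\p\tt\eta\mu/\p\tau_{ij}(\tau)$ (for $i\le j$) has rank $N+1$. Denoting that row by $w_{[\eta,\mu]}(\tau)\in\C^{N+1}$, the vectors $\{w_{[\eta,\mu]}(\tau)\}$ span $\C^{N+1}$; and $D^2([\e,\de],[\e_1,\de_1],\ldots,[\e_N,\de_N])(\tau)$ is the determinant of the $(N+1)\times(N+1)$ matrix with rows $w_{[\e,\de]},w_{[\e_1,\de_1]},\ldots,w_{[\e_N,\de_N]}$. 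Supposing for contradiction that $w_{[\e,\de]}(\tau)\ne 0$, the remaining vectors $w_{[\eta,\mu]}(\tau)$ with $[\eta,\mu]\ne[\e,\de]$ either span $\C^{N+1}$ (in which case a basis can be picked among them, and the standard exchange lemma lets me swap in $w_{[\e,\de]}(\tau)$ to obtain a nonvanishing $D^2$) or span an $N$-dimensional subspace not containing $w_{[\e,\de]}(\tau)$ (in which case any $N$ independent vectors from among them together with $w_{[\e,\de]}(\tau)$ form a basis of $\C^{N+1}$, again giving a nonvanishing $D^2$). Either outcome contradicts the hypothesis, so $w_{[\e,\de]}(\tau)=0$, i.e.\ (\ref{theta}) and (\ref{theta2bis}) both hold at $\tau$.

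The heat equation rewrites (\ref{theta2bis}) as the vanishing of all second $z$-derivatives of $\tt\e\de$ at $z=0$, so together with (\ref{theta}) we see that $z=(\tau\e+\de)/2$ is a point of multiplicity $\ge 3$ on $\T_\tau$; the parity of the even characteristic $[\e,\de]$ forces the multiplicity there to be even, and hence $\ge 4$, which is precisely the defining condition for $(\p^2\t)_{\rm null}$ at this specific two-torsion point. The main technical ingredient is thus the rank theorem from \cite{igbook}, without which the reverse inclusion would be inaccessible; once that is in hand, the remainder is standard linear algebra together with the heat-equation translation.
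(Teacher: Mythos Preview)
Your argument is correct and is exactly the approach the paper has in mind: the paper states this proposition immediately after recording the maximal-rank fact from \cite{igbook} and introduces it with ``similarly to the previous case,'' meaning the proof of Proposition~1. Your write-up simply makes that analogy explicit---the easy inclusion from a vanishing first column, and the reverse inclusion by extending the nonzero vector $w_{[\e,\de]}$ to a basis chosen from the spanning family---so there is nothing substantively different between your proof and the paper's.
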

\begin{rem}
One could also write more complicated equations for the locus $(\p^k\t)_{\rm null}$ for $k\geq 3$, using suitable vector-valued modular forms. Unfortunately our method for obtaining equations using scalar modular forms does not work in this case, since the jacobian matrices do not have maximal rank everywhere, cf. \cite{diff}.
\end{rem}

\section{Some components of $(\p^k\t)_{\rm null}$ within the locus of decomposable ppavs}
In this section we start our investigation of the irreducible components of  $(\p^k\t)_{\rm null}$ and their possible dimensions.

As an immediate consequence of the results of \cite{ko} or \cite{el}, we have
\begin{prop}
$$(\p^{k}\t)_{\rm null}=\emptyset \quad {\rm for}\, k\geq g-1$$
$$(\p^{g-2}\t)_{\rm null}=\A_1\times\dots\times\A_1 $$
\end{prop}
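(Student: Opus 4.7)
The plan is to deduce both assertions from the sharp bound ${\rm mult}_x \T \leq g$ on any $g$-dimensional ppav, proved by Kollár in \cite{ko} (with an alternative proof and strengthening in \cite{el}), together with the classification of the equality case, which forces the ppav to be a product of $g$ elliptic curves. Recall that, unpacking the definition, $(\p^k\t)_{\rm null}$ consists of those $(X_\tau,\T_\tau)$ admitting a $2$-torsion point $x\in X_\tau[2]$ at which ${\rm mult}_x\T_\tau\geq k+2$; the parity requirement ${\rm mult}_x\T_\tau-k\in 2\Z_{>0}$ is automatic from the matching of the parity of the multiplicity with the parity of the characteristic at $x$.

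For the first statement, I would simply observe that if $k\geq g-1$, then membership in $(\p^k\t)_{\rm null}$ would require a point of multiplicity at least $g+1$, violating the bound ${\rm mult}_x\T_\tau\leq g$; hence $(\p^k\t)_{\rm null}=\emptyset$.

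For the second statement ($k=g-2$), I would argue the two inclusions separately. For $\subseteq$, membership requires ${\rm mult}_x\T_\tau\geq g$ at some $2$-torsion $x$, which by the multiplicity bound forces ${\rm mult}_x\T_\tau=g$; the equality case then yields $(X_\tau,\T_\tau)\cong\prod_{i=1}^g (E_i,\T_i)$, giving the inclusion into $\A_1\times\dots\times\A_1$. For the reverse inclusion, I would take a product of $g$ elliptic curves, write $\T_\tau=\sum_{i=1}^g \pi_i^*\T_i$ with $\pi_i$ the projection to the $i$-th factor, and consider the $2$-torsion point $x=(p_1,\dots,p_g)$, where each $p_i$ is the odd $2$-torsion point cut out by $\T_i\subset E_i$. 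The $g$ smooth pullback divisors $\pi_i^*\T_i$ meet transversally at $x$, so the local equation of $\T_\tau$ there is of the form $z_1z_2\cdots z_g$ and ${\rm mult}_x\T_\tau=g$; since $g-(g-2)=2\in 2\Z_{>0}$, this shows $\A_1\times\dots\times\A_1\subseteq(\p^{g-2}\t)_{\rm null}$.

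There is no substantial obstacle: the proposition is immediate once Kollár's multiplicity bound together with its equality case is in hand. The only minor bookkeeping is the parity verification (the sum of $g$ odd characteristics on the elliptic factors has parity $g\bmod 2$, which matches ${\rm mult}_x\T_\tau=g$) and the transversality of the pullback divisors at the special product $2$-torsion point, both of which are routine.
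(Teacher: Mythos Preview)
Your argument is correct and is exactly what the paper intends: it states the proposition as ``an immediate consequence of the results of \cite{ko} or \cite{el}'' without further detail, and you have spelled out precisely that deduction---Koll\'ar's bound $\operatorname{mult}_x\Theta\le g$ for the emptiness when $k\ge g-1$, and the equality-case classification (product of elliptic curves) for $k=g-2$, together with the easy explicit verification of the reverse inclusion.

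One small remark: your opening sentence that ``the parity requirement is automatic'' is phrased a bit loosely---having $\operatorname{mult}_x\Theta\ge k+2$ at \emph{some} $2$-torsion point does not in general guarantee the congruence $\operatorname{mult}_x\Theta\equiv k\pmod 2$ at that same point. However, this does not affect your proof: in the cases at hand you never actually use that shortcut, since for $k\ge g-1$ the bound alone gives the contradiction, and for $k=g-2$ you explicitly verify $g-(g-2)=2\in 2\Z_{>0}$ for the reverse inclusion.
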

For lower values of $k$ we can describe some components:
\begin{thm}\label{irred}
For $1\leq k\leq g-2$, the variety
$\t_{g-k,\,\rm null}  \times\A_1\times\dots \times A_1$  is an  irreducible components of $(\p^k\t)_{\rm null}$.
The codimension of this subvariety gives the bound
$$
 \codim(\p^k\t)_{\rm null} \leq gk+1-(1/2)(k^2+k)
$$
\end{thm}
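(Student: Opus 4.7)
The plan is threefold: (1) verify the inclusion $\t_{g-k,\,\rm null}\times\A_1\times\cdots\times\A_1\subseteq(\p^k\t)_{\rm null}$ by exhibiting a concrete high-multiplicity 2-torsion point, (2) compute the codimension of the product by a direct dimension count, and (3) show it is an irreducible component via a tangent-space calculation at a generic point.

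For (1) and (2), let $X=Y\times E_1\times\cdots\times E_k$ with $Y\in\t_{g-k,\,\rm null}$; in product coordinates the theta function factorizes as $\vartheta_X=\vartheta_Y\prod_i\vartheta_{E_i}$. Let $p_Y\in Y$ be the even 2-torsion where $\vartheta_Y$ vanishes to even order (at least $2$) and $e_i\in E_i$ the unique odd 2-torsion. Then at the 2-torsion point $x=(p_Y,e_1,\ldots,e_k)$, multiplicities add, giving ${\rm mult}_x\vartheta_X\geq 2+k$ of parity $k\bmod 2$; hence $X\in(\p^k\t)_{\rm null}$. Irreducibility of the product is inherited from that of $\t_{g-k,\,\rm null}$, and the codimension $\frac{g(g+1)}{2}-\frac{(g-k)(g-k+1)}{2}+1-k=gk+1-\frac{k^2+k}{2}$ gives the stated bound.

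The crux is (3). At a generic $X$ as above, write $\tau=\diag(\tau_Y,\tau_1,\ldots,\tau_k)$, and let $[\e,\d]$ be the characteristic of $x$, so that $\vartheta[\e,\d](\tau,z)=A(\tau_Y,z_Y)\prod_i B_i(\tau_i,w_i)$ factorizes accordingly. For generic $Y$, the Hessian $Q_{ab}=\p_a\p_b A(\tau_Y,0)$ is nondegenerate of rank $g-k$, and $\gamma_i=\p_{w_i}B_i(\tau_i,0)\ne 0$. The leading Taylor polynomial of $\vartheta[\e,\d]$ at $z=0$ is then the homogeneous polynomial of degree $k+2$,
$$P=\tfrac12\,Q(z_Y)\cdot\prod_{i=1}^k \gamma_i w_i,\qquad Q(z_Y)=\sum_{a,b\in Y}Q_{ab}z_az_b.$$
By the heat equation, a tangent vector $s\in T_\tau\H_g=\mathrm{Sym}^2\C^g$ preserves the defining conditions $\p^I\vartheta[\e,\d](\tau,0)=0$ for $|I|\leq k+1$ to first order precisely when the degree-$k$ homogeneous polynomial $\sum_{a,b}s_{ab}\p_a\p_b P$ vanishes identically.

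Using the product structure of $P$ and separating indices into $Y$- and $w$-blocks, this single polynomial vanishing decouples into three independent groups of linear conditions on $s$: the coefficient of $\prod_l w_l$ yields $\tr(s_Y Q)=0$; for each $i$, the coefficient of $z_c\prod_{l\ne i}w_l$ ($c\in Y$) yields $Qs_{Y,w_i}=0$, and by nondegeneracy of $Q$ the entire mixed block $s_{Y,w_i}=0$; and for each pair $i<j$, the coefficient of $z_az_b\prod_{l\ne i,j}w_l$ (for any $Q_{ab}\ne 0$) forces $s_{w_i,w_j}=0$. The diagonal entries $s_{w_iw_i}$ are unconstrained since $\p_{w_i}^2 P=0$. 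Summing, the tangent space has codimension $1+(g-k)k+\binom{k}{2}=gk+1-\frac{k^2+k}{2}$, equal to the codimension of the product; moreover the list of conditions manifestly describes $T_X(\t_{g-k,\,\rm null}\times\A_1^k)$ (the trace condition carves out $T_Y\t_{g-k,\,\rm null}$, the block-zero conditions carve out the decomposable directions). This identifies the product as an irreducible component of $(\p^k\t)_{\rm null}$. The main technical input is the nondegeneracy of $Q$ at a generic $Y\in\t_{g-k,\,\rm null}$, i.e.\ that a generic vanishing theta-null corresponds to an ordinary double point of the theta divisor, which is classical (and can be verified by hand when $g-k=2$).
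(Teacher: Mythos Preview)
Your proof is correct and follows essentially the same approach as the paper's. Both work on a level cover with a fixed characteristic $[\e,\de]=[(\a,1,\ldots,1),(\b,1,\ldots,1)]$, verify the containment via the factorization of the theta function, and then compute the rank of the Jacobian of the defining equations at a generic point of the product. The paper lists the nonzero Jacobian columns according to the same trichotomy you use (how many of the $k$ indices in $I$ lie in the $Y$-block versus the $w$-block) and observes that they are linearly independent; your packaging via the leading homogeneous form $P=\tfrac12 Q(z_Y)\prod_i\gamma_i w_i$ and the single polynomial condition $\sum s_{ab}\p_a\p_b P=0$ is a cleaner way to organize the same linear algebra, and your decoupling into the trace condition, the mixed-block conditions, and the off-diagonal $w$-conditions matches the paper's cases (1), (2), (3) exactly, yielding the same count $1+(g-k)k+\binom{k}{2}$. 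The one technical input you make explicit---nondegeneracy of the Hessian $Q$ at a generic point of $\t_{g-k,\rm null}$---is used implicitly in the paper's assertion that ``all these columns are linearly independent''.
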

\begin{proof}
We perform the computation on $\A_g(4,8)$ for the characteristic
$\e=(\a,1, \dots,1)$ and $\de=(\b,1,\dots,1)$, where $[\a,\b]$ is a $(g-k)$-dimensional characteristic. Note that $[\a,\b]$ is necessarily even. Consider then the set
$$
 A_k([\e,\de]):=\left\{\frac{\partial^h\tt\e\de(\tau, z)}{\partial z_{i_1}\dots\partial z_{i_h}}\vert_{z=0}=0\right\}_{{\rm for\ all}
 \ h\leq k.}
$$
Obviously $A_k$ contains the $\Gamma_g(2)$ conjugates of the locus
$$
 \left(\tt\a\b(\tau')=0\right)\times \H_1\times\dots\times\H_1,
$$
(where $\tau'\in\A_{g-k}$),
which is of codimension $ gk+1-(1/2)(k^2+k)$ in $\A_g$. The only non-zero elements of the jacobian matrix of the equations defining $A_k$ are those involving derivatives of  order $k+2$. These form a matrix with $(1/2)(g+1)g$ rows and the columns that can be indexed in the following three ways:\smallskip

1) the $k$ derivatives involve the indices $g-k+1,g-k+2,\dots, g$

2) the $k$ derivatives involve only one of the first $g-k$ indices and all, but one among $g-k+1,g-k+2,\dots, g$

3) the $k$ derivatives involve two ( even with multiplicity) of the first $g-k$ indices and all, but two among $g-k+1,g-k+2,\dots, g.$

Then we have to consider the derivative $\p/\p \tau_{a,b}$,  or equivalently, by the heat equation, $\p^2/\p z_a \p z_b$. In  the first of the above cases we get a column with non-zero entries being
$$
 \frac{\partial^2  \tt\a\b(\tau')}{\partial z_{a}\partial z_{b} }\prod_{i=1}^{k}  \left(  \tt 1 1^{'}(\lambda_i)\right)\quad {\rm with}\,\,1\leq a\leq b\leq g-k.
$$
In the second case we get $(g-k)k$ columns involving in the $(a,b)$ row
$$
  \frac{\partial^2  \tt\a\b(\tau')}{\partial z_{a}\partial z_{j} }\prod_{i=1}^{k}  \left(  \tt 1 1^{'}(\lambda_i)\right)\quad {\rm with}\,\,1\leq a \leq g-k<b\leq g.
$$
In the third case we get $1/2)k(k-1)$ columns involving in the $(a,b)$ row
$$
 \frac{\partial^2  \tt\a\b(\tau')}{\partial z_{l}\partial z_{j} }\prod_{i=1}^{k}  \left(  \tt 1 1^{'}(\lambda_i)\right)\quad {\rm with}\,\, g-k<a< b\leq g,
$$
where we denoted $\tt 1 1^{'}(\lambda_i)$ the derivative of the one-dimensional theta function with odd characteristic evaluated at zero for the elliptic curve with periods 1 and $\lambda_i=\tau_{g-k+i,g-k+i}$.

It can be easily  checked that all these columns are linearly independent, and hence the Jacobian has maximal rank, so that the equations give an irreducible component of the locus $(\p^k\t)_{\rm null}$, as claimed.
\end{proof}

\begin{prop}
When $k\leq g-4$, the locus $(\p^k\t)_{\rm null}$ is reducible.
\end{prop}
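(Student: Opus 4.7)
The plan is to exhibit a second family of decomposable ppavs sitting in $(\p^k\t)_{\rm null}$ that is not contained in the irreducible component $V_1:=\t_{g-k,\,\rm null}\times\A_1\times\dots\times\A_1$ produced by Theorem \ref{irred}. The hypothesis $k\leq g-4$ will enter through the inequality $g-k-2\geq 2$, which guarantees that a generic element of $\A_{g-k-2}$ is indecomposable.

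First, I would construct the candidate second locus $V_2\subset\A_g$: the image of the product map from $\A_{g-k-2}\times\A_1\times\dots\times\A_1$, where now there are $k+2$ elliptic factors. On a point $(A,E_1,\dots,E_{k+2})$ of $V_2$, pick the characteristic equal to $[0,0]$ on the $A$-factor and to $[1,1]$ on each $E_i$, so that the overall parity is $k+2\equiv k\pmod 2$. The associated theta function factors as
$$
 \vartheta_A(\tau_A,z_0)\cdot\prod_{i=1}^{k+2}\tt{1}{1}(\lambda_i,z_i),
$$
and its multiplicity at the corresponding product $2$-torsion point equals the sum of the multiplicities of the factors at their respective $2$-torsion points. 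Each odd one-dimensional factor contributes exactly $1$ and the even $A$-factor contributes $\geq 0$, so the total multiplicity is $\geq k+2$ and of the correct parity, proving that $V_2\subseteq(\p^k\t)_{\rm null}$.

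Next, I would verify that a generic point of $V_2$ does not lie in $V_1$. By the hypothesis $k\leq g-4$, a generic $A\in\A_{g-k-2}$ is indecomposable of dimension $\geq 2$, so the decomposition into indecomposable factors of a generic $V_2$-ppav is $A\times E_1\times\dots\times E_{k+2}$. If it were also to lie in $V_1$, it would admit a decomposition $B\times F_1\times\dots\times F_k$ with $\dim B=g-k$ and $B\in\tn^{(g-k)}$; by uniqueness (up to permutation) of the indecomposable decomposition of a ppav, $B$ must be of the form $A\times E_{i_1}\times E_{i_2}$ for two of the original elliptic curves. But the even theta constants of such a $B$ are products of even theta constants of the three factors, and for generic choices of $A,E_1,\dots,E_{k+2}$ none of the individual factor theta constants vanishes; hence $B\notin\tn^{(g-k)}$, a contradiction. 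The same argument rules out membership in any $\Sp$-translate of $V_1$, since all of them have the shape ``$\tn$-divisor times $k$ elliptic curves''.

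Combining the two steps, $(\p^k\t)_{\rm null}$ strictly contains $V_1$, so it has at least one further irreducible component and is reducible. The main obstacle is the second step: a priori a ppav of $V_2$ might admit a \emph{different} product decomposition that happens to place it in $V_1$. The resolution uses the uniqueness of the decomposition of a ppav into indecomposable factors together with the multiplicativity of theta constants under products, and this is precisely what pins down the cut-off $k\leq g-4$, needed so that the core factor $A$ of dimension $g-k-2$ is generically indecomposable rather than forced to split off further elliptic pieces.
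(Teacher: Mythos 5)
There is a fatal gap in your second step: your locus $V_2=\A_{g-k-2}\times\A_1\times\dots\times\A_1$ (with $k+2$ elliptic factors) is in fact \emph{contained} in the component $V_1=\t_{g-k,\,\rm null}\times\A_1\times\dots\times\A_1$, so it cannot witness reducibility. The error is the claim that the even theta constants of $B=A\times E_{i_1}\times E_{i_2}$ are products of \emph{even} theta constants of the factors. An even characteristic of a product decomposes as a tuple of factor characteristics an \emph{even number} of which are odd; in particular $[\e,\de]$ with $[\e_A,\de_A]$ even and $[1,1]$ on both elliptic factors is an even characteristic of $B$, and the corresponding theta constant is $\tt{\e_A}{\de_A}(\tau_A)\cdot\tt{1}{1}(\lambda_{i_1})\cdot\tt{1}{1}(\lambda_{i_2})=0$ identically, since odd theta constants vanish. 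Hence every decomposable ppav lies in $\tn$ (the paper itself records the extreme case $\t_{2,\,\rm null}=\A_1\times\A_1$), so $B\in\tn^{(g-k)}$ always, and regrouping $A\times E_{i_1}\times E_{i_2}$ as the $\tn$-factor exhibits every point of $V_2$ as a point of $V_1$.

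The paper avoids this trap by choosing the second locus $\t_{g-k-1,\,\rm null}\times\A_2\times\A_1\times\dots\times\A_1$, with only $k-1$ elliptic factors and one indecomposable abelian surface. Membership in $(\p^k\t)_{\rm null}$ is checked exactly as you do (multiplicities add up to $2+1+(k-1)=k+2$ at a suitable two-torsion point), but now the generic point has only $k-1$ elliptic curves among its indecomposable factors, whereas every point of $V_1$ has at least $k$; uniqueness of the indecomposable decomposition then gives the contradiction without any appeal to non-vanishing of theta constants. The hypothesis $k\le g-4$ enters as $g-k-1\ge 3$, so that the generic element of $\t_{g-k-1,\,\rm null}$ is indecomposable (for $g-k-1=2$ the construction collapses back into $V_1$ by the same regrouping phenomenon that sinks your $V_2$). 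Your argument could be repaired by switching to a second locus whose generic indecomposable decomposition has strictly fewer than $k$ elliptic factors, which is precisely what the paper does.
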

\begin{proof}
We recall that  $\t_{2,\,\rm null} =\A_1\times \A_1$ and $\t_{3,\,\rm null}$ is the hyperelliptic locus in $\A_3$ (in particular a generic element of it is indecomposable). Thus the locus
$$
 \t_{g-k-1,\,\rm null}  \times\A_2\times\A_1\times\dots\times\A_1 \subset (\p^k\t)_{\rm null},
$$
and must be contained in an irreducible component of $(\p^k\t)_{\rm null}$ different from $\t_{g-k,\,\rm null}  \times\A_1\times\dots \times A_1$.
\end{proof}

\begin{rem}
When $k=g-3$,  for small $g$, we have that $(\p^{g-3}\t)_{\rm null}$  is irreducible. In general this is not known. Repeating  the argument of the proof of corollary 2 in   \cite{el}, one can show that if a principally polarized abelian variety in  $(\p^{g-3}\t)_{\rm null}$  is decomposable, and thus necessarily lies in  $\t_{g-3,\,\rm null}  \times\A_{1}\times\dots\times\A_{1}$. It is interesting to know whether there are indecomposable ppavs in $(\p^{g-3}\t)_{\rm null}$. See also remark \ref{gminus3}
\end{rem}

\begin{rem}
For a generic $\tau\in\t_{g-k,\,\rm null}  \times\A_1\times\dots \times A_1$ the theta divisor $\T_{\tau}$ has a unique point of order 2 having  multiplicity $k+2$. However, for a generic point in
$\t_{g-k-n,\,\rm null}  \times\A_{i_1}\times\dots\times\A_{i_k}$, with $k<i_1+\dots i_k=k+n\leq g-2$, the theta divisor $\T_{\tau}$ has several points of order 2 and multiplicity $k+2$.
As a consequence we see that the latter product is contained in the    singular locus of  $(\p^k\t)_{\rm null}\subset\A_g(2)$.
\end{rem}

We now investigate the reducibility of the locus $G_0$.
\begin{thm}\label{reduc}
The loci $N_{g-1,\,0} ' \times\A_1$ and $\t_{g-1,\,\rm null}  \times\A_1$ are irreducible components of $G_0$.
\end{thm}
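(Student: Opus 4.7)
The plan is to use a Zariski tangent space computation at a carefully chosen point of $\Ts$ to bound the local dimension of $G_0$ near a generic point of each candidate locus; combined with the (known) irreducibility of $\t_{g-1,\,\rm null}$ and $N_{g-1,0}'$ in $\A_{g-1}$, this will identify $\t_{g-1,\,\rm null}\times\A_1$ and $N_{g-1,0}'\times\A_1$ as irreducible components. The containment $\t_{g-1,\,\rm null}\times\A_1\cup N_{g-1,0}'\times\A_1\subseteq G_0$ is immediate from the product structure: if $\tau=\tau_1\oplus\tau_2$ with $\tau_1\in N_0^{(g-1)}$ and $\tau_2\in\A_1$, and $p_1\in\Sing\T_{\tau_1}$, $q$ the unique theta zero on $X_{\tau_2}$, then $\vartheta(\tau,(z_1,z_2))=\vartheta(\tau_1,z_1)\vartheta(\tau_2,z_2)$ vanishes to order $2+1=3$ at $(p_1,q)$, so $(\tau,(p_1,q))\in\Ts$ and $\tau\in G_0$.

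The main step, and the main obstacle, is the tangent space computation showing that these loci are not contained in any larger irreducible subvariety of $G_0$. For $\t_{g-1,\,\rm null}\times\A_1$, I would fix $\tau_1$ generic in $\t_{g-1,\,\rm null}$, with corresponding singular even $2$-torsion point $e_1\in X_{\tau_1}$, take $\tau_2\in\A_1$ with theta zero $q$, and work at $P=(\tau_1\oplus\tau_2,(e_1,q))\in\Ts$. The key observation is that the entire $z$-Hessian of $\vartheta$ vanishes at $P$: the pure second partials contain a factor $\vartheta(\tau_1,e_1)=0$ or $\vartheta(\tau_2,q)=0$, while mixed partials $\p_i\p_g\vartheta$ with $i\le g-1$ contain the factor $\p_i\vartheta(\tau_1,e_1)=0$ since $e_1\in\Sing\T_{\tau_1}$. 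Consequently the tangent equations $d(\p_i\vartheta)(\dot\tau,\dot z)=0$ at $P$ carry no $\dot z$ term and reduce, via the heat equation, to linear conditions on $\dot\tau$ expressed through the third $z$-derivatives of $\vartheta$ at $P$. A direct computation shows that the only nonzero such derivatives are those with exactly one index equal to $g$, yielding (a) for each $i\le g-1$, the equation $\sum_k H_{ik}\,\dot\tau_{gk}=0$, with $H$ the Hessian of $\vartheta(\tau_1,\cdot)$ at $e_1$, and (b) for $i=g$, the equation $\tr(H\cdot\dot\tau_1)=0$ on the upper $(g-1)\times(g-1)$ block $\dot\tau_1$.

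For $\tau_1$ generic in $\t_{g-1,\,\rm null}$, $e_1$ is an ordinary double point, so $H$ is nondegenerate; then (a) forces $\dot\tau$ to be block-diagonal and (b) cuts out one further linear condition, giving $g$ independent conditions on $\dot\tau$. The remaining equations $d(\p_i\p_j\vartheta)=0$ can be seen to merely determine $\dot z$ uniquely in terms of $\dot\tau$ via the invertibility of $H$, and impose no new constraint on $\dot\tau$ (the constraint they would give in the $(g,g)$ slot is already captured by (b)). Hence the image of $T_P\Ts$ in $T_\tau\H_g$ has dimension at most $g(g-1)/2$. Since the fiber $X_\tau\cap\Ts$ reduces to the single point $(e_1,q)$ for $\tau$ generic in $\t_{g-1,\,\rm null}\times\A_1$, every irreducible component of $G_0$ through $\tau$ lifts to an irreducible component of $\Ts$ through $P$, and so has dimension at most $g(g-1)/2=\dim(\t_{g-1,\,\rm null}\times\A_1)$; thus the latter is an irreducible component of $G_0$. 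The argument for $N_{g-1,0}'\times\A_1$ proceeds identically, with $e_1$ replaced by a generic ordinary double point $p_1$ of $\T_{\tau_1}$ for $\tau_1$ generic in $N_{g-1,0}'$: only $\p_i\vartheta(\tau_1,p_1)=0$ and nondegeneracy of the Hessian at $p_1$ are used in the tangent computation, so the fact that $p_1$ is not a $2$-torsion point is inessential.
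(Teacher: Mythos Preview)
Your argument is correct and follows essentially the same route as the paper's proof. Both compute the Jacobian (equivalently, the Zariski tangent space) of the defining equations of $\Ts$ at a point $(\tau'\oplus\lambda,(x',q))$ with $x'$ an ordinary double point of $\T_{\tau'}$, use the product factorization $\vartheta=\eta\cdot\phi$ to see that the only nonzero third $z$--derivatives are those with exactly one index equal to $g$ (governed by the invertible Hessian $H$ of $\eta$ at $x'$), and conclude that the Jacobian has rank $2g$, so that locally $\codim\Ts\ge 2g$ and hence $\codim\pi(\Ts)\ge g$. Your version spells out the splitting of the tangent equations into the $g$ conditions (a)+(b) on $\dot\tau$ and the $\dot z$--determining equations, whereas the paper packages this as the block form $\left(\begin{smallmatrix}0&0&A\\0&{}^tA&B\end{smallmatrix}\right)$ with $\operatorname{rk}A=g$; the content is identical. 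One minor point: in the $N_{g-1,0}'$ case the fiber $X_\tau\cap\Ts$ over a generic $\tau$ consists of the pair $(\pm p_1,q)$ rather than a single point, but since both are ordinary double points the rank computation applies at each, and the conclusion is unaffected.
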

\begin{proof}
Recall that the locus $\Ts\subset\X_g$ is defined by the following equations (using the heat equation for the theta function)
$$
 0=\vartheta(\tau,z)=\p_i\vartheta(\tau,z)=\p_i\p_j\vartheta(\tau,z)\quad\forall 1\le i,j\le g.
$$
Computing the Jacobian matrix of these defining equations for $\tau\in\A_{g-1}\times\A_1$ we get
$$
 \left(\begin{array}{cccccccc}
 \,\p_1\vartheta&\p_1\p_1\vartheta
 &\dots&\p_1\p_g\vartheta&\p_1\p_1\p_1\vartheta&\dots&\p_1\p_g\p_g \vartheta\\
 \,\p_2\vartheta&\p_2\p_1\vartheta
 &\dots&\p_2\p_g\vartheta&\p_2\p_1\p_1\vartheta&\dots&\p_2\p_g\p_g \vartheta\\  \dots&\dots&\dots&\dots&\dots&\dots&\dots \\\
 \,\p_g\vartheta&\p_g\p_1\vartheta&\dots&\p_g\p_g\vartheta& \p_g\p_1\p_1\vartheta&\dots&\p_g\p_g\p_g \vartheta\\
 \,\p_1\p_1\vartheta&\p_1\p_1\p_1\vartheta
 &\dots&\p_1\p_1\p_g\vartheta&\p_1\p_1\p_1\p_1\vartheta&\dots& \p_1\p_1\p_g\p_g\vartheta\\
   \dots&\dots&\dots&\dots&\dots&\dots \\\
  \,\p_g\p_g\vartheta&\p_g\p_g\p_1\vartheta
 &\dots&\p_g\p_g\p_g\vartheta&\p_g\p_g\p_1\p_1\vartheta&\dots& \p_g\p_g\p_g\p_g\vartheta
 \end{array}\right)
$$
In both cases, let $$(X_{\tau}, \T_{\tau})=(X_{\tau'}, \T_{\tau'})\times(E_{\lambda}, \T_{\lambda})$$
and let $x=(x', y)$ be the point of multiplicity at least 3 such that
$x'$ is a singular point of  $\T_{\tau'}$ and $y= \T_{\lambda}$. We can assume that $x'$ is an ordinary double point, then we have that the jacobian  matrix at the point $(\tau, x)\in \Ts$ is of the form
$$
  \left(\begin{array}{ccc }
  0&0&\p_i\p_j\p_k\vartheta\\
  0&{}^t(\p_i\p_j\p_k\vartheta)&\p_i\p_j\p_k\p_l\vartheta
  \end{array}\right)
$$
Here the first column is in fact a column-vector, the entry (1,2) is the $g\times g$ null matrix; $(\p_i(\p_j\p_k\vartheta))$ is  the  $g\times (1/2)g(g+1)$ matrix of the third derivatives of $\vartheta$, ${}^t(\p_i\p_j\p_k\vartheta)$ is its transpose, and  the entry  (2,3)  is the $(1/2)g(g+1)\times (1/2)g(g+1)$ matrix of the fourth derivatives of $\vartheta$. An elementary computation gives that the rank of the matrix $(\p_i\p_j\p_k\vartheta)$ is $g$, since $x'$ is an ordinary double point, hence the previous matrix has rank $2g$.

Hence  in a neighborhood of the point  $(\tau, x)$ the subvariety $\Ts$ has codimension at least $2g$ in $\X_g$, and thus its projection has codimension at least $g$ in $\A_g$. Since ${N}_{g-1,\,0} ' \times\A_1$ and $\t_{g-1,\,\rm null}  \times\A_1$ both have codimension $g$ in $\A_g$, these are two irreducible components of $G_0$.
\end{proof}

In more generality, note that $(\p^k\t)_{\rm null}\times\A_1\subset (\p^{k+1}\t)_{\rm null}$, and we will now consider the question of whether these give irreducible components. The result is the following generalization of theorem \ref{irred}.

\begin{prop}\label{codg}
Let $X$ be an irreducible component of $(\p^k\t)_{\rm null}^{(g)}$.
Then the locus $X\times \A_1$  is an irreducible component of $ (\p^{k+1}\t)_{\rm null}^{(g+1)}$ if and only if the rank of the
$g \times \binom{g+k}{k+1}$ matrix
$$
 L:=\left(\frac{\p\tt\e\de (Z)}{\p z_{a}\p z_I}\right)
$$
for $1\leq a \leq g$ and $I\subset\{1,\dots,g\}$ with $\#I=k+1$, is equal to $g$.

Moreover, if $k=1$, the set $X\times \A_1$  is an irreducible component of $ (\p^{2}\t)_{\rm null}$ if and only if the codimension of $X$ in $\A_g$ is equal to $g$.
\end{prop}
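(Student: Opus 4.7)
The strategy is to mimic the Jacobian computation of Theorem~\ref{irred} in this more abstract setting and read off the rank conditions. Fix a characteristic $[\e,\de]$ witnessing the generic membership $\tau\in X\subset (\p^k\t)_{\rm null}^{(g)}$, and work in $\A_{g+1}(4,8)$ with the concatenated characteristic $[(\e,1),(\de,1)]$. At a decomposable point $\tau'=\tau\oplus\lambda\in X\times \A_1$, the theta function factors as
$$
 \tt{(\e,1)}{(\de,1)}(\tau',(z',z_{g+1}))=\tt\e\de(\tau,z')\cdot\tt{1}{1}(\lambda,z_{g+1}),
$$
so multiplicities add and the containment $X\times\A_1\subset (\p^{k+1}\t)_{\rm null}^{(g+1)}$ is immediate. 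A simple count gives $\codim(X\times\A_1)=\codim(X)+g$ in $\A_{g+1}$.

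To decide when this inclusion realizes an irreducible component, I compute the rank of the Jacobian of the defining system $\{\p^\a_z\tt{(\e,1)}{(\de,1)}(\tau',0)=0\}_{|\a|\leq k+2}$ at a generic $\tau'=\tau\oplus\lambda\in Y:=X\times\A_1$, after splitting the $\tau'$-directions into three groups: the $\tau_{ij}$ with $i,j\leq g$, the off-diagonal $\tau_{i,g+1}$ for $1\leq i\leq g$, and the tangent direction $\tau_{g+1,g+1}$. Using the heat equation and the product rule on the factorization above, each entry $\p f_\a/\p\tau_{ij}|_{\tau'}$ equals, up to a non-zero constant, $\p^{\a'+e_i+e_j}_{z'}\tt\e\de(\tau,0)\cdot\p^{\a_{g+1}}_{z_{g+1}}\tt{1}{1}(\lambda,0)$ (with the obvious modification if $j=g+1$), where we write $\a=(\a',\a_{g+1})$.

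The main step is an order-and-parity bookkeeping showing that only two types of equations produce non-vanishing Jacobian rows, and that they contribute to disjoint blocks of columns. Equations with $(|\a'|,\a_{g+1})=(k,1)$ contribute only to the $\tau_{ij}$ columns with $i,j\leq g$, and form, up to the non-zero factor $\p_{z_{g+1}}\tt{1}{1}(\lambda,0)$, exactly the Jacobian of the defining equations of $(\p^k\t)_{\rm null}^{(g)}$ at $\tau$; by the hypothesis that $X$ is a component, this block has rank $\codim(X)$. Equations with $(|\a'|,\a_{g+1})=(k+1,0)$ contribute only to the $\tau_{i,g+1}$ columns, and form exactly the transpose of the matrix $L$. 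All other entries vanish, either because $|\a'|$ or $\a_{g+1}$ has the wrong parity relative to $\e\cdot\de$ or to $1\cdot 1$, because the corresponding $\p^\gamma \tt\e\de(\tau,0)$ has order $\leq k+1$ and so lies in the defining ideal of $X\subset(\p^k\t)_{\rm null}^{(g)}$, or because $\tt{1}{1}(\lambda,0)=0$. Consequently the Jacobian is block-diagonal with total rank $\codim(X)+\operatorname{rank}(L)$, which equals $\codim(Y)=\codim(X)+g$ if and only if $\operatorname{rank}(L)=g$.

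For the final assertion with $k=1$, the entries of $L$ are $\p^3\tt\e\de/(\p z_a\p z_i\p z_j)(\tau,0)$, which form (up to a non-zero constant) precisely the Jacobian in the $\tau_{ij}$-directions of the $g$ equations $\p_a\tt\e\de(\tau,0)=0$ defining the component $X$ of $(\p\t)_{\rm null}^{(g)}$. The hypothesis that $X$ is an irreducible component forces this Jacobian to have rank $\codim(X)$ at a generic point; since $L$ has only $g$ rows, its rank is $g$ if and only if $\codim(X)=g$, yielding the second claim. The only real technical obstacle is the parity-and-order bookkeeping above, without which the block-diagonal structure would not be apparent.
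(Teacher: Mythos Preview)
Your proof is correct and follows essentially the same strategy as the paper's: compute the Jacobian of the defining equations of $(\p^{k+1}\t)_{\rm null}^{(g+1)}$ at a generic point of $X\times\A_1$, show it decomposes as the block $J$ (the Jacobian for $X$, of rank $\codim X$) together with the block $L$, and for $k=1$ observe that $L$ is the transpose of $J$. You have simply made explicit the parity-and-order bookkeeping that the paper compresses into the single sentence ``the Jacobian matrix \ldots\ is the same as $J$ together with the matrix $L$.''
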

\begin{proof}
As before, we will compute the Jacobian matrix of the defining equations at a smooth point $\tau\in X$, lifted to the level cover $\A_g(4,8)$, so that we are talking about the multiplicity of the theta function with a given characteristic $[\a,\b]$ at zero. The   $(1/2)g(g+1)\times \binom{g+k-1}{k}$ Jacobian matrix is then
$$
  J=\left(\frac{\partial^2  \tt\a\b(\tau)}{\partial z_{a}\partial z_{b} \p z_I}\right)
$$
with $1\leq a\leq b\leq g$, and $I\subset\{1,\dots,g\},$ with $\#I=k$. Since this is the Jacobian matrix of the equations defining $X$, its rank is equal to the codimension of $X$ in $\A_g$.

Then for any $x\in\A_1$ the point $(\tau,x)\in \A_g\times \A_1$ is a smooth point of $(\p^{k+1}\t)_{\rm null}$, with the vanishing theta function with high multiplicity being $\tt{\a\ 1}{\b\ 1}$. The Jacobian matrix of the equations defining $\p^{k+1}\t)_{\rm null}\subset\A_{g+1}$ is the same as $J$ together with the matrix $L$. The first statement of the proposition follows, while for the special case of $k=1$ we observe that $L$ is the transpose of $J$ and thus has the same rank.
\end{proof}

\section{Codimensions of $(\p\t)_{\rm null},(\p^2\t)_{\rm null}$, and $G_0$}
Note that the dimension of a reducible variety means the maximum dimension of the components, and thus $\codim$ means the minimal codimension of the components.

In the previous section we have computed some components of the loci $(\p^k\t)_{\rm null}$ and thus know their dimensions. It is natural to conjecture the following.
\begin{conj}\label{dthetanull}
For any $g\geq 3$
$$
 \codim_{\A_g} (\p\t)_{\rm null}=\codim_{\A_g} G_0=g.
$$
\end{conj}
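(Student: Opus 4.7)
The plan is to prove the two inequalities $\codim_{\A_g}(\p\t)_{\rm null}\leq g$ and $\codim_{\A_g}G_0\geq g$, and combine them with the inclusion $(\p\t)_{\rm null}\subset G_0$ to conclude. The upper bound is immediate from Theorem \ref{irred} with $k=1$: it exhibits $\tn^{(g-1)}\times\A_1$ as an irreducible component of $(\p\t)_{\rm null}$ of codimension exactly $g$, and since $(\p\t)_{\rm null}\subset G_0$ (any odd $2$-torsion triple point of $\T_\tau$ is in particular in $\Ts$) one obtains $\codim_{\A_g}G_0\leq\codim_{\A_g}(\p\t)_{\rm null}\leq g$. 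The real content is therefore to establish $\codim_{\A_g}G_0\geq g$.

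For this lower bound I would work on the universal family, writing $G_0=\pi(\Ts)$. The generic fibers of $\pi|_{\Ts}\colon\Ts\to G_0$ are $0$-dimensional (otherwise the ppav would lie in $G_k$ for some $k\geq 1$), so the bound reduces to proving $\codim_{\X_g}\Ts\geq 2g$ at a generic point of every component of $\Ts$. I would do this by computing the Jacobian of the defining equations $\vartheta=\p_i\vartheta=\p_i\p_j\vartheta=0$ with respect to the local coordinates $\{\tau_{kl}:k\leq l\}$ and $\{z_m\}$, following the computation in Theorem \ref{reduc}. Using the heat equation to convert $\tau$-derivatives into $z$-derivatives of one higher order, and using that all $z$-derivatives of $\vartheta$ of order at most $2$ vanish on $\Ts$, the Jacobian takes the block form
$$
 J=\pmatrix 0 & 0 \\ A & 0 \\ B & {}^tA\endpmatrix,
$$
where $A=(\p_i\p_j\p_k\vartheta)$ is the $g\times g(g+1)/2$ matrix of third $z$-derivatives (row index $i$, column multi-index $(j,k)$ with $j\leq k$), ${}^tA$ its transpose, and $B$ the symmetric matrix of fourth $z$-derivatives. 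Since the two copies of $A$ occupy disjoint rows and disjoint columns, $\operatorname{rank}J\geq 2\operatorname{rank}A$, so the desired bound $\codim_{\X_g}\Ts\geq 2g$ reduces to showing $\operatorname{rank}A=g$ at a generic point of every irreducible component.

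The hard part will be verifying $\operatorname{rank}A=g$ in full generality. This non-degeneracy is equivalent to saying that the leading cubic $c(y)=\sum\p_i\p_j\p_k\vartheta\cdot y_iy_jy_k$ of $\vartheta$ at the triple point has no translation symmetry $c(y+tv)=c(y)$, i.e., that the projectivized tangent cone of $\T_\tau$ at the triple point is not a cone in $\P^{g-1}$ with a vertex --- a highly non-generic condition on a cubic hypersurface. For the ``product'' triple points arising in the proof of Theorem \ref{reduc} this is verified by direct computation. For components of $\Ts$ projecting into the indecomposable locus in $\A_g$, the plan would be to combine (i) an induction on $g$ via degeneration to the boundary of the partial compactification, where $\Ts^{(g)}$ meets strata fibered over $\Ts^{(g-1)}$, and (ii) an analysis of the loci where the tangent cone degenerates, controlling them via the Ciliberto--van der Geer bound $\codim N_k\geq k+2$ of \cite{amsp}, since a conical tangent cone at a triple point tends to produce further singularities of $\T_\tau$ and thus pushes $\tau$ into a deeper $N_k$ stratum. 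The main difficulty, and the reason the statement remains a conjecture, is to rule out an ``exceptional'' indecomposable component of $\Ts$ whose generic triple point has a genuinely conical tangent cone; this case analysis does not, to our knowledge, follow from the existing toolbox.
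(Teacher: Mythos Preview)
The statement is a \emph{conjecture} in the paper; there is no proof to compare against, only partial evidence. Your outline is structurally correct and in fact matches the paper's own framing of the problem.

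The upper bound is exactly Theorem~\ref{irred} (case $k=1$) together with the inclusion $(\p\t)_{\rm null}\subset G_0$; this part is complete. Your reduction of $\codim G_0\geq g$ to $\codim_{\X_g}\Ts\geq 2g$ is one direction of the Proposition following Conjecture~\ref{co3} (and your fiber-dimension remark is unnecessary for that direction: $\dim\pi(\Ts)\leq\dim\Ts$ suffices). Your Jacobian block computation is the same one carried out in the proof of Theorem~\ref{tch}, where the paper likewise reduces everything to showing $\operatorname{rank}(\p_i\p_j\p_k\vartheta)=g$ at a generic point. The paper, however, establishes this only for components of $\Ts$ lying inside the odd $2$-torsion locus $Y$, by comparing the tangent spaces of $\Ts$ and of $Y$ at a common smooth point and forcing the rank defect $k$ to be zero. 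That comparison has no analogue away from $Y$.

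The genuine gap is the one you name. Your degeneration idea (i) is essentially Theorem~\ref{boun}: the induction on $g$ goes through \emph{provided} every component of $(\p\t)_{\rm null}^{(g)}$ meets the rank-one boundary $\p\A_g$, but that hypothesis is itself open (Keel--Sadun only guarantees meeting the full toroidal boundary). Your idea (ii) is not worked out: a vertex in the projectivized cubic tangent cone is a condition on the third jet of $\vartheta$, and the passage you suggest from ``tangent cone is a cone'' to ``$\tau$ lies in a deeper $N_k$'' is not supplied, nor does it obviously follow from the Ciliberto--van der Geer bound. So your proposal is an honest and accurate summary of what is known and what is missing, but it is not a proof---and neither is anything in the paper.
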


We give some evidence for this conjecture by using the degeneration techniques. We want to consider the  intersection of $ (\p\t)_{\rm null}$ with $\p\A_{g+1}$, the corank one boundary component of any toroidal compactification of $\A_{g+1}$. A point of $\p\A_{g+1}$ is a semiabelian variety of torus rank one, i.e. a pair $(\overline{G}, D)$, where $\overline{G}$ is a non-normal compactification of the extension $1\to{\mathbb G}_m\to G\to B\to 0$ for some $B\in\A_g$, and $D$ is an ample divisor that is the limit of the theta divisors. The restriction of the map
\begin{equation} \label{fib}
 \pi_{|\p\A_{g+1}}:\p\A_{g+1}\to\A_{g}
\end{equation}
has $B/Aut (B, \Xi)$ as fiber over $(B,\Xi)$. This means that the
fiber over a general $(B,\Xi)\in\A_{g}$ is the Kummer variety
$B/\pm 1$.

Studying the boundary behavior of the universal theta divisor and using the Fourier-Jacobi expansion similarly to \cite{mu}, we get the following
\begin{prop}
\begin{equation}\label {inter0}
 (\p\t)_{\rm null}^{(g+1)}\cap\partial\A_{g+1}=\left(\bigcup_{ (\overline{G},D)}2_B(\S^{(g)}\cap\pi^{-1}((B,\Xi))\right)\cup \left(\pi^{-1} (\p\t)_{\rm null}^{(g)}\cap\T_g\right)
 \end{equation}
\end{prop}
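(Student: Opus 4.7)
The plan is to compute the boundary behaviour of each equation defining $(\p\t)_{\rm null}^{(g+1)}$ via a Fourier-Jacobi expansion, following the strategy of \cite{mu}. Near a boundary point of type $(\overline G,D)$ with abelian part $(B,\Xi)$ and extension class $b\in B$, I write
$$
 \tau=\pmatrix \tau' & \xi\\ {}^t\xi & \tau_{g+1,g+1}\endpmatrix,\qquad q:=\exp(\pi i\tau_{g+1,g+1}),\qquad t:=\exp(2\pi iz_{g+1}),
$$
so that $q\to 0$ corresponds to $\p\A_{g+1}$, the matrix $\tau'\in\H_g$ produces $(B,\Xi)$, and $\xi$ modulo $\Z^g+\tau'\Z^g$ (and $\pm 1$) represents $b$ in the fiber $B/\!\pm 1$ of (\ref{fib}). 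For an odd characteristic $[\e,\de]\in(\Z/2\Z)^{2(g+1)}$, split as $\e=(\e',\e_{g+1})$, $\de=(\de',\de_{g+1})$, the standard Fourier-Jacobi expansion reads
$$
 \tt\e\de(\tau,z)=\sum_{n\in\Z+\e_{g+1}/2}q^{n^2}t^n e^{\pi i\de_{g+1}n}\,\tt{\e'}{\de'}(\tau',z'+n\xi),
$$
and $\tau\in(\p\t)_{\rm null}^{(g+1)}$ precisely if $\grad_z\tt\e\de(\tau,0)$ vanishes for some odd $[\e,\de]$.

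The analysis then splits according to $\e_{g+1}$. If $\e_{g+1}=0$, then $[\e',\de']$ is odd; in the expansion of $\grad_z\tt\e\de(\tau,0)$ the first $g$ components have leading term at order $q^0$ and the last at order $q^1$. Using oddness of $\tt{\e'}{\de'}$, these leading coefficients simplify to
$$
 \p_i\tt{\e'}{\de'}(\tau',0)=0\ \ (i=1,\ldots,g),\qquad \tt{\e'}{\de'}(\tau',\xi)=0.
$$
The first $g$ conditions are exactly those defining $(\p\t)_{\rm null}^{(g)}$ with odd characteristic $[\e',\de']$, and the last says $\xi$ lies on the translated theta divisor $\{\tt{\e'}{\de'}(\tau',z)=0\}\subset B$; ranging over admissible odd $[\e',\de']$ these loci assemble into the second summand $\pi^{-1}(\p\t)_{\rm null}^{(g)}\cap\T_g$. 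If instead $\e_{g+1}=1$, then $n\in\Z+\tfrac12$ and the leading order is uniformly $q^{1/4}$. A parallel computation --- sub-cased by $\de_{g+1}\in\{0,1\}$ and using the parity of $\tt{\e'}{\de'}$ in each sub-case --- yields
$$
 \tt{\e'}{\de'}(\tau',\xi/2)=0,\qquad \p_i\tt{\e'}{\de'}(\tau',\xi/2)=0\ \ (i=1,\ldots,g).
$$
Setting $s:=\xi/2+(\tau'\e'+\de')/2\in B$, these equations say $s\in\S^{(g)}\cap\pi^{-1}((B,\Xi))$; since $\tau'\e'+\de'\equiv 0$ in $B$, we have $b=\xi\equiv 2s\in 2_B(\S^{(g)}\cap\pi^{-1}((B,\Xi)))$, which is the first summand.

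The opposite inclusion is given by the same computation run backwards: starting from either of the two loci on the right, the prescribed vanishing of the leading Fourier-Jacobi coefficient produces an odd $[\e,\de]$ whose characteristic gradient vanishes on a one-parameter family in $\A_{g+1}$ tending to the chosen boundary point, putting that family in $(\p\t)_{\rm null}^{(g+1)}$. The main obstacle is to promote ``vanishing of the leading $q$-coefficient'' to the scheme-theoretic intersection of the closure of $(\p\t)_{\rm null}^{(g+1)}$ with $\p\A_{g+1}$: this requires first fixing a trivialization of the line bundles of which the gradient equations are sections (using the transformation law from \S 2), so that the leading coefficient is an intrinsic modular object, and --- in the $\e_{g+1}=0$ case --- matching the translates of $\Xi$ indexed by the various characteristics $[\e',\de']$ to the single symbol $\T_g$. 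This bookkeeping is most cleanly done on the level cover $\A_{g+1}(4,8)$, where every component of the universal theta divisor is individually distinguished; descent via the $\Sp$-action then yields the formula as stated.
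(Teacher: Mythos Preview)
Your proof is correct and follows essentially the same approach as the paper: both compute the Fourier--Jacobi expansion of $\grad\tt\e\de$ at the standard rank-one boundary component, split into cases according to the last entry of $\e$, and identify the leading coefficients with the two summands on the right. Your treatment is in fact more detailed than the paper's (which simply records the leading terms and omits the parity sub-case analysis and the discussion of the reverse inclusion), but the strategy and the key computations are the same.
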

\begin{proof}
Analytically   we consider the Fourier-Jacobi expansion of
$$
 \grad\tt\e\d(Z,0)\quad{\rm with}\quad Z=\left(\begin{array}{cc}
\tau&z\\
z'&w
 \end{array}\right)
$$
along the boundary components. Since the symplectic group acts transitively on the boundary components and on the odd characteristics, considering the  Fourier-Jacobi expansion of  a fixed $\grad\tt\e\de$ at all boundary components is equivalent to considering the Fourier-Jacobi expansion of $\grad\tt\e\de$ for all possible $[\e,\de]$ at the standard boundary component where the period matrix element $\tau_{g+1,g+1}$ goes to $i\infty$. It is easy to check that the Fourier-Jacobi expansions are of two types depending on whether the $(g+1)$'th entry  of the characteristic is 0 and 1. If it is 1  one we get
$$
 \p_i\,  \tt \e\d(Z, 0) =\p_i\tt {\a}{\b}(\tau, z/2)exp(i\pi w/4)+\dots\quad {\rm when}\,i=1,\dots, g
$$
$$
 \p_{g+1}\,  \tt \e\d(Z, 0) =\tt {\a}{\b}(\tau, z/2)exp(i\pi w/4)+\dots
$$
If it is 0 we get
$$
  \p_i\,  \tt \e\d(Z, 0) =\p_i\tt {\a}{\b}(\tau, 0)+\dots
$$
$$
  \p_{g+1}\,  \tt \e\d(Z, 0) =\tt {\a}{\b}(\tau, z)exp(i\pi w)+\dots
$$
Now the intersection  of $\grad\tt\e\de=0$ with the blow up of the first boundary components gives $2_{X_{\tau}}(\Sing (\T_{\tau}))$ along each fiber, and $\T_{\tau}$ along $p^{-1}((\p\t)_{\rm null}^{(g)})$
\end{proof}

Notice that by results of Keel and Sadun \cite{ke}, any component of $(\p\t)_{\rm null}^{(g)}$, which is a priori of codimension at most $g$, must intersect the boundary of any toroidal compactification of $\A_g$. We can handle the degenerations only for the case when the components intersect the torus rank one boundary component, i.e. when they intersect the boundary of the partial compactification.
\begin{thm}\label{boun}
If all irreducible components of $(\p\t)_{\rm null}^{(g)}$  meet $\p\A_g$ (the boundary of the partial compactification),  and $\codim_{\A_{g-1}} (\p\t)^{(g-1)}_{\rm null}=g-1$, then   $\codim_{\A_g} (\p\t)_{\rm null}=g$, i.e. if for any $g$ all irreducible components of $(\p\t)_{\rm null}^{(g)}$ meet $\p\A_g$, then conjecture \ref{dthetanull} holds.
\end{thm}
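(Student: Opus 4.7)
The plan is to combine Theorem \ref{irred} with $k=1$, which exhibits $\t_{g-1,\,\rm null}\times\A_1$ as an irreducible component of codimension exactly $g$, with a matching \emph{lower} bound on the codimension obtained from the preceding proposition. Since the codimension of a (possibly reducible) variety is the minimum codimension taken over its irreducible components, it suffices to prove that no irreducible component of $(\p\t)_{\rm null}^{(g)}$ has codimension smaller than $g$ in $\A_g$.

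So let $X$ be any irreducible component of $(\p\t)_{\rm null}^{(g)}$ and let $\bar X$ be its closure in the partial compactification. The hypothesis guarantees $\bar X\cap\p\A_g\neq\emptyset$, and since $\p\A_g$ is a Cartier divisor, every component of this intersection has dimension at least $\dim X-1$. The proposition places $\bar X\cap\p\A_g$ inside the union of two pieces, each of which I would bound separately. The first piece, $\bigcup_{(B,\Xi)}2_B(\S^{(g-1)}\cap\pi^{-1}((B,\Xi)))$, has dimension equal to $\dim\S^{(g-1)}$; by the classical Andreotti--Mayer result the image $N_0^{(g-1)}$ is a divisor whose generic point parametrizes a theta divisor with a single ordinary double point, so this dimension equals $\dim\A_{g-1}-1=g(g-1)/2-1$. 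The second piece, $\pi^{-1}((\p\t)_{\rm null}^{(g-1)})\cap\T_{g-1}$, fibers over $(\p\t)_{\rm null}^{(g-1)}$ with $(g-2)$-dimensional fibers, and the inductive hypothesis $\codim_{\A_{g-1}}(\p\t)_{\rm null}^{(g-1)}=g-1$ forces its dimension to be $(g(g-1)/2-(g-1))+(g-2)=g(g-1)/2-1$ as well.

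Combining these bounds gives $\dim X-1\leq g(g-1)/2-1$, hence $\dim X\leq\dim\A_{g-1}$ and $\codim_{\A_g}X\geq g$, as required. The essential feature of the argument is the precise matching of dimensions of the two boundary contributions, which together account for exactly codimension $g$ in $\A_g$. What the hypothesis forbids --- and what could otherwise go wrong --- is the existence of a component $X$ that avoids the torus-rank-one boundary entirely and escapes into a deeper stratum where the proposition provides no leverage; verifying the boundary-meeting property in every dimension is therefore the expected main obstacle in promoting this theorem to a proof of the full Conjecture \ref{dthetanull}.
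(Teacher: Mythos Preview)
Your argument is correct and follows essentially the same approach as the paper: use the boundary description from the preceding proposition to bound each irreducible component from below by codimension $g$, treating the two pieces $2_B(\S^{(g-1)})$ and $\pi^{-1}((\p\t)_{\rm null}^{(g-1)})\cap\T_{g-1}$ separately, with the inductive hypothesis handling the second. You are in fact somewhat more explicit than the paper --- spelling out that $\p\A_g$ is a Cartier divisor to get the dimension drop, and invoking the Andreotti--Mayer description of $N_0^{(g-1)}$ to compute $\dim\S^{(g-1)}$ --- where the paper simply asserts that the first piece has codimension $g$.
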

\begin{proof}
Obviously we  know that the codimension is less or equal to $g$.
The first case  in the intersection above gives a subvariety of codimension $g$. By induction the second case also gives a subvariety of codimension $g$, since $(\p\t)_{\rm null}^{(g-1)}$ has codimension $g-1$ (and thus also its preimage under $\pi^{-1}$), and we are then intersecting with a divisor.
\end{proof}

One can also conjecture the codimensions of the other loci we study.
\begin{conj} \label{co1}
For any $g\geq 3$
$$
 \codim_{\X_g}\Ts=2g.
$$
\end{conj}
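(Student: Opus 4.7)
The plan is to sandwich $\codim_{\X_g}\Ts$ between matching upper and lower bounds of $2g$. For the upper bound I would apply Theorem~\ref{irred} with $k=1$: the subvariety $\t_{g-1,\rm null}\times\A_1\subset\A_g$ is an irreducible component of $(\p\t)_{\rm null}$ of codimension $g$, hence of dimension $\binom{g}{2}$, and over a generic $\tau$ in it the theta divisor has an isolated $2$-torsion point $x$ of multiplicity exactly $3$ (the product of an even theta null in $\t_{g-1,\rm null}$ with the odd one-dimensional factor). Then $(\tau,x)\in\Ts$, and since the fibre of $\pi:\Ts\to\A_g$ is $0$-dimensional over such $\tau$, this produces a subset of $\Ts$ of dimension exactly $\binom{g}{2}$. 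Checking that the Jacobian maximality in the proof of Theorem~\ref{irred} propagates to the full system of defining equations of $\Ts\subset\X_g$ identifies this subset as an entire irreducible component, yielding $\codim_{\X_g}\Ts\le 2g$.

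For the lower bound I would generalize the Jacobian computation in the proof of Theorem~\ref{reduc}, which was carried out at one family of decomposable points, to an arbitrary smooth point $(\tau,z)\in\Ts$. Using the heat equation $\p_{\tau_{ij}}\vartheta=\frac{1}{2\pi i(1+\de_{ij})}\p_i\p_j\vartheta$ to convert every $\tau$-derivative into a higher-order $z$-derivative, and observing that every $z$-derivative of $\vartheta$ of order $\le 2$ vanishes at a point of $\Ts$, the Jacobian of the $1+g+\binom{g+1}{2}$ defining equations with respect to the $g+\binom{g+1}{2}$ coordinates on $\X_g$ takes, up to nonzero diagonal rescalings coming from the heat equation, the block form
$$
 J=\pmatrix 0 & 0 \\ 0 & P^{\rm t} \\ P & Q \endpmatrix,
$$
where $P=(\p_i\p_j\p_k\vartheta)$ is the $\binom{g+1}{2}\times g$ matrix of third derivatives indexed by $((i,j),k)$ and $Q$ is the $\binom{g+1}{2}\times\binom{g+1}{2}$ matrix of fourth derivatives. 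Choosing any $r\times r$ nonvanishing minor of $P$ with $r=\operatorname{rank} P$ produces a $2r\times 2r$ submatrix of $J$ whose off-diagonal blocks are that minor and its transpose, both invertible; hence $\operatorname{rank} J\ge 2\operatorname{rank} P$, and as soon as one shows $\operatorname{rank} P=g$ at a smooth point of every irreducible component of $\Ts$, the bound $\codim_{\X_g}\Ts\ge 2g$ follows.

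The main obstacle is exactly this rank statement. A non-zero vector $c\in\ker P$ corresponds to a translation-invariance direction of the cubic tangent cone $\sum\p_i\p_j\p_k\vartheta\,u_iu_ju_k$ of $\vartheta$ at $(\tau,z)$, and informally amounts to an infinitesimal motion in the direction $c$ of the multiplicity-$\ge 3$ locus of $\T_\tau$, which should only occur on proper subloci corresponding to positive-dimensional $G_k$-type strata. To rule this out I would proceed by induction on $g$ together with the degeneration machinery used in Theorem~\ref{boun}: intersecting $\Ts^{(g)}$ with the torus rank-one boundary component of a toroidal compactification of $\A_g$ via the Fourier--Jacobi expansion of $\vartheta$ and its first and second derivatives relates $\Ts^{(g)}\cap\p\A_g$ to $\S^{(g-1)}$ and $\Ts^{(g-1)}$, and combined with the component description of Theorem~\ref{irred} this should propagate the bound $\codim_{\X_g}\Ts\ge 2g$ to all components meeting the boundary of the partial compactification, in the same conditional sense as Theorem~\ref{boun} for $(\p\t)_{\rm null}$.
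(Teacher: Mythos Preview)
The statement you are trying to prove is presented in the paper as a \emph{conjecture}; the paper does not prove it and offers only partial evidence (Theorem~\ref{tch} for components of $\Ts$ contained in $Y$, the equivalence with Conjecture~\ref{co3}, and conditional degeneration statements). So there is no proof in the paper to compare against, and one should not expect your outline to close into a complete argument.

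Your upper bound $\codim_{\X_g}\Ts\le 2g$ is correct and is also noted in the paper (in the proof of the theorem following Conjecture~\ref{co2}). The issue is entirely in the lower bound, and you have correctly located the obstruction: the claim $\operatorname{rank}P=g$ at a generic point of every component of $\Ts$. Your block computation giving $\operatorname{rank}J\ge 2\operatorname{rank}P$ is fine and is precisely the computation in the proof of Theorem~\ref{tch}. But the rank statement itself is essentially the conjecture. The paper shows (the Proposition preceding Theorem~\ref{tch}) that Conjecture~\ref{co1} is equivalent to $\codim G_k\ge g+k$ for all $k$; and your own heuristic only gives one inclusion: a tangent vector to the fibre $\Ts\cap X_\tau$ does lie in $\ker P$, but a vector in $\ker P$ need not integrate to a curve in the fibre, since the fibrewise scheme $\{\p_i\p_j\vartheta=0\}\subset X_\tau$ may well be non-reduced at an isolated but degenerate triple point. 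So one cannot rule out $\operatorname{rank}P<g$ on some component by this reasoning. The paper succeeds in forcing $\operatorname{rank}P=g$ only for components contained in $Y$ (Theorem~\ref{tch}), by playing the tangent space of $\Ts$ off against that of $Y$, which is cut out by $2g$ equations; for a general component there is no such second description available.

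Your fallback to induction plus Fourier--Jacobi degeneration does not close the gap either. Exactly as in Theorem~\ref{boun}, such an argument is conditional on every irreducible component meeting the rank-one boundary $\p\A_g$ of the \emph{partial} compactification. Keel--Sadun only guarantees that a complete subvariety of $\A_g$ of positive dimension meets some boundary stratum of a full toroidal compactification, not that it meets $\p\A_g$; the paper states this conditionality explicitly and does not remove it. In short, your outline recovers precisely the partial and conditional results the paper establishes, but the step you flag as ``the main obstacle'' is the actual content of the open conjecture.
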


\begin{conj} \label{co2}
For any $g\geq 4$
$$
  \codim_{\A_g}(\p^{2}\t)_{\rm null}= 2g-2.
$$
\end{conj}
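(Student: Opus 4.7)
The upper bound follows immediately from Theorem \ref{irred} with $k = 2$: the variety $\t_{g-2, \rm null} \times \A_1 \times \A_1$ is an irreducible component of $(\p^2\t)_{\rm null}$ with codimension $2g + 1 - 3 = 2g - 2$ in $\A_g$. It therefore suffices to establish the matching lower bound $\codim(\p^2\t)^{(g)}_{\rm null} \geq 2g - 2$.

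My plan is to mirror the strategy of Theorem \ref{boun}: induct on $g$, reducing the inductive step to an analysis of the boundary $\p\A_{g+1}$ of the partial compactification under the assumption that every component of $(\p^2\t)^{(g+1)}_{\rm null}$ meets this boundary. The base case $g = 4$ would be handled by direct computation, using the scalar modular forms $D^2([\e, \de], [\e_1, \de_1], \dots, [\e_N, \de_N])$ constructed earlier in the paper. For the inductive step, I would write down the Fourier--Jacobi expansion along $\tau_{g+1, g+1} \to i\infty$ of $\tt\e\de(Z, 0)$ and its second $u$-derivatives at $u = 0$ for each even characteristic $[\e, \de]$, splitting into the two cases $\e_{g+1} = 0$ and $\e_{g+1} = 1$ exactly as in the Fourier--Jacobi analysis preceding Theorem \ref{boun}. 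In the case $\e_{g+1} = 1$ the leading Fourier coefficient of $\tt\e\de(Z, 0)$ is $\tt\a\b(\tau, z/2)\exp(i\pi w/4)$ with $[\a,\b]$ the reduced characteristic, and via the heat equation the vanishing of all second $u$-derivatives translates into the condition that $\tt\a\b$ has multiplicity $\geq 4$ at the two-torsion point determined by $z$, together with coupling conditions between $z$ and the sub-leading Fourier coefficients. In the case $\e_{g+1} = 0$ the leading coefficient is $\tt\a\b(\tau, 0)$, and the analogous analysis yields a parallel statement with no two-torsion shift.

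Putting the two cases together should describe $(\p^2\t)^{(g+1)}_{\rm null} \cap \p\A_{g+1}$ as a union of pieces, each of codimension at least $2g - 2$ in $\A_{g+1}$: either a preimage under the fibration $\p\A_{g+1} \to \A_g$ of $(\p^2\t)^{(g)}_{\rm null}$ (codim $2g - 4$ in $\A_g$, hence codim $2g - 2$ in $\A_{g+1}$ after accounting for the one-dimensional boundary direction), or a piece expressed via $\Ts^{(g)}$ intersected with two-torsion fibers, whose codimension is controlled by Conjecture \ref{co1} on $\codim \Ts$. The principal obstacle is the Fourier--Jacobi computation for the \emph{second} $u$-derivatives: the heat equation couples $\tau$- and $u$-derivatives, and the mixed derivatives $\p^2/\p u_i \p u_{g+1}$ at $u = 0$ depend on sub-leading Fourier coefficients in a way that is markedly more intricate than the first-derivative analysis used for $(\p\t)_{\rm null}$. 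A secondary obstacle, shared with Theorem \ref{boun}, is the need to ensure every component actually meets the partial-compactification boundary $\p\A_{g+1}$, rather than merely some toroidal boundary as guaranteed by Keel--Sadun; resolving this, together with Conjecture \ref{co1}, is likely an essential input rather than a byproduct of the present argument.
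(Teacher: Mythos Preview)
The statement you are attempting to prove is labeled a \emph{Conjecture} in the paper, and the paper does not claim a proof of it. What the paper does prove is exactly the conditional result your proposal sketches: Theorem~\ref{muthm} computes $(\p^2\t)_{\rm null}^{(g+1)}\cap\p\A_{g+1}$ via the Fourier--Jacobi expansion (the two pieces being $2_B(\Ts^{(g)}|_\Xi)$ and $\pi^{-1}((\p^2\t)_{\rm null}^{(g)})\cap\S^{(g)}$), and the theorem immediately following it shows that, \emph{assuming} every component of $(\p^2\t)_{\rm null}^{(k)}$ meets $\p\A_k$, Conjecture~\ref{co2} for all $k\le g+1$ is equivalent to Conjecture~\ref{co1} for all $k\le g$. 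Your proposal is essentially a rediscovery of this argument, and you correctly flag both obstructions the paper leaves open: the boundary-meeting hypothesis (Keel--Sadun only gives intersection with \emph{some} toroidal boundary, not the rank-one stratum) and the dependence on Conjecture~\ref{co1}.

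One small correction to your bookkeeping: in the inductive piece, the paper gets $\pi^{-1}((\p^2\t)_{\rm null}^{(g)})\cap\S^{(g)}$, not merely the preimage of $(\p^2\t)_{\rm null}^{(g)}$; the extra codimension comes from intersecting with $\S^{(g)}$ (codimension $\ge 2$ in each fiber), which is how $2g-4$ becomes $2g$ inside $\X_g\cong\p\A_{g+1}$. Your accounting ``codim $2g-4$ in $\A_g$, hence codim $2g-2$ in $\A_{g+1}$'' undercounts by $2$; you need the $\S^{(g)}$ intersection to close the gap. Aside from this, your outline matches the paper's conditional argument, and does not constitute an unconditional proof --- nor does the paper claim one.
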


We will give a degeneration argument relating these two conjectures.
\begin{thm}\label{muthm}
\begin{equation}\label {inter}
 (\p^{2}\t)_{\rm null}^{(g+1)}\cap\partial\A_{g+1}=\left(\bigcup_{ (\overline{G},D)}2_B(\Ts\vert_{\Xi})\right)\cup (\pi^{-1}(\p^2\t)_{\rm null}^{(g)}\cap \S^{(g)})
 \end{equation}
\end{thm}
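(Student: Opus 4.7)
The plan is to repeat the Fourier--Jacobi method of the preceding proposition, applied now to the second-derivative equations that cut out $(\p^2\t)_{\rm null}$. Writing $Z=\left(\begin{smallmatrix}\tau&z\\ z'&w\end{smallmatrix}\right)$ with $\tau\in\H_g$, $z\in\C^g$, and setting $q=e^{\pi iw}$, one has for an even characteristic $[\e,\de]$ with $\e=(\a,\e_{g+1})$, $\de=(\b,\de_{g+1})$ the expansion
\[
 \tt\e\de(Z,0)=\sum_{n\in\Z+\e_{g+1}/2}q^{n^2}e^{\pi in\de_{g+1}}\,\tt\a\b(\tau,nz),
\]
together with analogous expansions for each $\p_i\p_j\tt\e\de(Z,0)$, reducing all defining equations to genus-$g$ theta values evaluated at $nz$. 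Since $\Sp$ acts transitively on characteristics and on torus-rank-one boundary components, analyzing this single boundary component is enough, and the two cases $\e_{g+1}=0$ and $\e_{g+1}=1$ will produce the two pieces of the union.

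For $\e_{g+1}=0$ (so $[\a,\b]$ must be even), the $q^0$ coefficients of $\tt\e\de(Z,0)$ and of $\p_i\p_j\tt\e\de(Z,0)$ for $i,j\le g$ are $\tt\a\b(\tau,0)$ and $\p_i\p_j\tt\a\b(\tau,0)$, which place $(B,\Xi)$ in $(\p^2\t)^{(g)}_{\rm null}$. The equations involving a $\zeta_{g+1}$-derivative have vanishing $q^0$ coefficient (the factor $n$ kills the $n=0$ term), so the proper component of their zero locus is detected by the $q^1$ coefficient; up to non-zero constants these are $\p_j\tt\a\b(\tau,z)$ and $\tt\a\b(\tau,z)$, placing the suitably shifted $(\tau,z)$ in $\S^{(g)}$. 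Together this gives the second piece $\pi^{-1}((\p^2\t)^{(g)}_{\rm null})\cap\S^{(g)}$.

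For $\e_{g+1}=1$ the summation runs over $n\in\Z+\tfrac12$, every defining equation has the same leading power $q^{1/4}$, and a parity analysis of $\tt\a\b$ in the two sub-cases producing an even $[\e,\de]$ (namely $\de_{g+1}=0$ with $[\a,\b]$ even, and $\de_{g+1}=1$ with $[\a,\b]$ odd) shows that the leading $q^{1/4}$ coefficients of $\tt\e\de$, $\p_i\p_j\tt\e\de$, $\p_i\p_{g+1}\tt\e\de$ and $\p_{g+1}^2\tt\e\de$ together force $\tt\a\b$ and all its first and second derivatives to vanish at $z/2$. This places the corresponding shifted point of $X_\tau$ in $\Ts^{(g)}$; reading $z$ in the Kummer fiber $B/\pm 1$ of $\pi$ over $(B,\Xi)$ gives exactly the condition $z\in 2_B(\Ts\vert_\Xi)$, yielding the first piece of the union.

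The main obstacle is the careful parity bookkeeping needed to check in each relevant sub-case that the parities of $\tt\a\b$ and its derivatives combine so that the leading Fourier--Jacobi coefficient is non-zero and produces exactly the claimed system of equations, with no spurious conditions. Once this is settled the reverse containment is automatic, since any point annihilating the leading Fourier--Jacobi coefficients of all defining equations lies in the closure of their common zero locus and hence in the boundary intersection.
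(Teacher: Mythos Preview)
Your proposal is correct and follows exactly the approach the paper intends: the paper's own proof simply says that the argument is ``analogous to the proof of the identity (\ref{inter0}), if one considers the second derivatives as well'' and omits all details, so you have in fact supplied the computation the authors left out. Your case split on $\e_{g+1}$ and the parity bookkeeping in the two sub-cases for $\e_{g+1}=1$ are precisely the details needed, and they reproduce the two pieces of the union as claimed.
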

\begin{proof}
The proof is analogous to the proof of the identity (\ref{inter0}), if one considers the second derivatives as well. There are no new technicalities or ideas involved, and we omit the details here.
\end{proof}

\begin{rem}
This intersection give us a rough estimate for the dimension of $ (\p^{2}\t)_{\rm null}^{(g+1)}$. In fact the intersection  of any irreducible components of  $ (\p^{2}\t)_{\rm null}^{(g+1)}$,  meeting  the first  boundary component $\p\A_{g+1}$ have dimension less or equal to $\dim\,\S^{(g)}=(1/2)g(g+1)-1$. This implies that all
irreducible components of  $ (\p^{2}\t)_{\rm null}^{(g+1)}$  meeting  the first  boundary component have codimension at least
$g+1$.
\end{rem}

We can relate the two conjectures by the following theorem
\begin{thm}
If for any $k\in[4,g+1]$ all components of $(\p^2\t)_{\rm null}^k$ intersect $\p\A_{k}$, then conjecture $\ref{co2}$ is true for all $k\in[4,g+1]$ if and only if conjecture $\ref{co1}$ is true for all $k\in[3,g]$.
\end{thm}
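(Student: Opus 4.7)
The strategy is induction on $k$, using Theorem \ref{muthm} to propagate codimension information between consecutive genera. Set $a_k := \codim_{\A_k}(\p^2\t)_{\rm null}^{(k)}$ and $b_k := \codim_{\X_k}\Ts^{(k)}$; Conjectures \ref{co1} and \ref{co2} then become $b_k = 2k$ and $a_k = 2k-2$. Using the dimensional identity $\dim\partial\A_{k+1} = \dim\X_k = k(k+3)/2$ together with Theorem \ref{muthm}, one reads off the codimensions in $\partial\A_{k+1}$ of the two pieces of the boundary intersection: the first piece is a generically finite image of $\Ts^{(k)}$, so has codimension $b_k$; the second piece $B_k := \pi^{-1}((\p^2\t)_{\rm null}^{(k)})\cap\S^{(k)}$ breaks up by components $X$ of $(\p^2\t)_{\rm null}^{(k)}$, each of codimension $c_X$ in $\A_k$ contributing a stratum of codimension $k + c_X - \dim\Sing\Xi_\tau$ in $\partial\A_{k+1}$, where $\Xi_\tau$ is the generic fiber. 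Since $\Xi_\tau$ is a reduced divisor in a $k$-dimensional ppav, $\dim\Sing\Xi_\tau \leq k-2$ always, so every such stratum has codimension at least $c_X + 2$.

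The hypothesis of the theorem ensures that every component of $(\p^2\t)_{\rm null}^{(k+1)}$ meets the boundary divisor properly, so $a_{k+1}$ equals the codimension of the boundary trace, yielding
$$a_{k+1} = \min\!\big(b_k,\,\codim_{\partial\A_{k+1}} B_k\big)\qquad\text{for}\quad k\in[3,g].$$
The base of the induction is $(\p^2\t)_{\rm null}^{(3)}=\emptyset$ (from $(\p^{g-1}\t)_{\rm null}=\emptyset$ at $g=3$), giving $B_3=\emptyset$ and $a_4 = b_3$. For direction $(\Leftarrow)$, assume $b_k = 2k$: once inductively $a_k = 2k-2$, every $c_X \geq 2k-2$ forces each stratum of $B_k$ to have codimension $\geq 2k$, with equality realized on the main component $\t_{k-2,\rm null}\times\A_1\times\A_1$ of Theorem \ref{irred} (whose generic $\Xi_\tau$ is a sum of three irreducible factors whose pairwise intersections achieve $\dim\Sing\Xi_\tau = k-2$), so $a_{k+1} = \min(2k,2k) = 2k$. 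For direction $(\Rightarrow)$, assume $a_k = 2k-2$: the inclusion of the first piece in the boundary trace yields immediately $a_{k+1} \leq b_k$, i.e.\ $b_k \geq 2k$; and the matching upper bound $b_k \leq 2k$ comes from lifting the $(\p\t)_{\rm null}$-component $\t_{k-1,\rm null}\times\A_1$ of Theorem \ref{irred} (each of whose points carries an odd 2-torsion point of multiplicity $\geq 3$) to a codimension-$2k$ subvariety of $\Ts^{(k)}\subset\X_k$. The $k=3$ case is handled directly: $b_3 = a_4 = 6 = 2\cdot 3$.

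The main technical obstacle is the codimension count for the second piece of the boundary intersection. The naive approach treating $\Sing\Xi_\tau$ as zero-dimensional (as it is over a generic $\tau \in N_0 \subset \A_k$) would give $\codim B_k = k + a_k = 3k-2$, which is merely an upper bound and is too large to close the recursion sharply at the conjectured value. The correct value $\codim B_k = 2k$ (under inductive Conjecture \ref{co2}) requires recognizing that over the main component of $(\p^2\t)_{\rm null}^{(k)}$ the theta divisor is reducible with a maximal $(k-2)$-dimensional singular stratum arising from pairwise intersections of its irreducible factors, and then invoking the universal bound $\dim\Sing\Xi_\tau\leq k-2$ (valid on any reduced theta divisor in a $k$-dimensional ppav) to ensure that no other component of $(\p^2\t)_{\rm null}^{(k)}$ produces a lower-codimension stratum of $B_k$.
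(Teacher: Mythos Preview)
Your proof is correct and follows essentially the same approach as the paper: both use the boundary description in Theorem \ref{muthm}, the fiberwise bound $\dim\Sing\Xi_\tau\le k-2$ (the paper phrases it as ``a generic point of $\Theta_\tau$ is smooth''), and induction with base $(\p^2\t)_{\rm null}^{(3)}=\emptyset$, $(\p^2\t)_{\rm null}^{(4)}=\A_1^4$. Your packaging via the recursion $a_{k+1}=\min(b_k,\codim B_k)$ is a clean way to organize the two directions, and your a priori upper bound $b_k\le 2k$ via the explicit lift over $\t_{k-1,\rm null}\times\A_1$ matches the paper's remark that $\pi(\Ts)\supseteq(\p\t)_{\rm null}$.

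One expository point: the work you do in the third paragraph to pin down $\codim B_k=2k$ \emph{exactly} (by exhibiting the component $\t_{k-2,\rm null}\times\A_1\times\A_1$ with $\dim\Sing\Xi_\tau=k-2$) is correct but unnecessary. For the direction $(\Leftarrow)$ only the lower bound $\codim B_k\ge 2k$ is needed, since $a_{k+1}=\min(2k,\codim B_k)=2k$ as soon as $\codim B_k\ge 2k$; the paper accordingly proves only this inequality. Your framing of ``the main technical obstacle'' thus slightly overstates what is required---the genuine obstacle is the universal bound $\dim\Sing\Xi_\tau\le k-2$ guaranteeing the \emph{lower} bound on $\codim B_k$, which you do invoke correctly.
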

\begin{proof}
We use the description of the boundary of the locus $(\p^2\t)_{\rm null}$ from formula (\ref{inter}). We know a priori that $\Ts^{(g)}$ has codimension at most $2g$ (since its projection to $\A_g$  contains $(\p\t)_{\rm null}$, which is of codimension at most $g$). Similarly we know a priori that $\codim(\p^2\t)_{\rm null}\le 2g-2$, since this locus contains $\tn^{(g-2)}\times\A_1\times\A_1$.

One implication is easy: if conjecture \ref{co2} holds for $k=g+1$, i.e. $\codim_{\A_{g+1}}(\p^2\t)_{\rm null}^{(g+1)}=2g$, then $\codim_{\p\A_{g+1}}\left((\p^2\t)_{\rm null}^{(g+1)}\cap \p\A_{g+1}\right)=2g$ (we know from (\ref{inter}) that this intersection with an open divisor $\p\A_{g+1}\subset\bar{\A_{g+1}}$ is non-empty, so it has codimension one in $(\p^2\t)_{\rm null}^{(g+1)}$), and thus the codimension of its component $2_B(\Ts^{(g)})$ cannot be less than $2g$.

For the other direction, assume that conjecture \ref{co1} holds for $k=g$, i.e. that $\codim_{\X_g}(\Ts)=2g$, and also inductively assume that we know that $\codim (\p^2\t)_{\rm null}^{(g)}=2g-2$. Since for any $\tau\in\A_g$ we have $\codim_{X_\tau} \S\cap X_\tau\ge 2$ (a generic point of $\Theta_{\tau}$ is smooth). Thus we have $\codim_{\X_g} \pi^{-1}(\p^2\t_{\rm null}^{(g)}\cap\S^{(g)})\ge 2g$, and thus $\codim_{\A_{g+1}}(\p^2\t)_{\rm null}^{(g+1)}=2g$.

For the base of the induction, observe that the locus $\Ts^{(3)}$ consists of the family of odd points of order two over $\A_1\times\A_1\times\A_1 $ and so has codimension $6$ in $\X_3$, while $(\p^2\t)_{\rm null}^{(4)}=\A_1\times\A_1\times\A_1\times\A_1=\tn^{(2)}\times\A_1\times\A_1$ also has codimension $6$ (see theorem \ref{irred}).
\end{proof}

\begin{rem}
In lemma 9 of \cite{sing} we proved and used  that $ (\p^{2}\t)_{\rm null}$ has codimension greater than $2$ in $\A_{g+1}$. Our proof quoted a result of  \cite{amsp} the proof of theorem 8.11 in \cite{amsp}, while it seems to us the statement in theorem 8.6(ii) of \cite{amsp} yields this more directly, implying that $3\le\codim G_0\le\codim (\p^{2}\t)_{\rm null}$.
\end{rem}

\medskip
In analogy with the situation for Andreotti-Mayer loci, it is natural to make the following
\begin{conj} \label{co3}
For any $g\geq 3$
$$
 \codim G_k \ge g+k
$$
\end{conj}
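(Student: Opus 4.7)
The plan is to deduce Conjecture~\ref{co3} from Conjecture~\ref{co1} by applying upper semicontinuity of fibre dimension to the projection $\pi|_{\Ts}\colon\Ts\to\A_g$. Indeed, by the second formulation of $G_k$ given in the introduction, $G_k$ is exactly the locus in $\A_g$ over which the fibre of $\pi|_{\Ts}$ has dimension $\ge k$, so the conjectured codimension bound becomes a statement about when this fibre dimension can jump.

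The key step is a dimension count. Let $T_i$ range over the irreducible components of $\Ts$, let $V_i:=\pi(T_i)\subseteq\A_g$, and let $d_i$ be the generic fibre dimension of $T_i\to V_i$, so that $\dim T_i=\dim V_i+d_i$. By Chevalley's semicontinuity theorem, the sublocus of $V_i$ over which the fibre of $T_i\to V_i$ has dimension $\ge k$ is closed of codimension at least $\max(0,k-d_i)$ in $V_i$. Hence the contribution of $T_i$ to $G_k$ has dimension at most $\dim T_i-k$ in either case: if $k\le d_i$ this contribution is all of $V_i$, of dimension $\dim T_i-d_i\le\dim T_i-k$, while if $k>d_i$ it has dimension at most $\dim V_i-(k-d_i)=\dim T_i-k$. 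Taking the maximum over the components yields
\[
 \dim G_k \;\le\; \max_i(\dim T_i-k) \;=\; \dim\Ts-k.
\]

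Now Conjecture~\ref{co1} asserts $\codim_{\X_g}\Ts=2g$, equivalently $\dim\Ts=\dim\A_g-g$ (since $\dim\X_g=\dim\A_g+g$). Substituting into the above inequality gives $\dim G_k\le\dim\A_g-g-k$, i.e.\ $\codim_{\A_g}G_k\ge g+k$, as desired. The same conclusion can be drawn from Conjecture~\ref{dthetanull} alone, once one observes that on each irreducible component of $G_0$ a generic ppav has only finitely many points of multiplicity $\ge 3$ on its theta divisor; each irreducible component of $\Ts$ then projects to a component of $G_0$ with generic fibre dimension zero, and hence $\dim\Ts=\dim G_0\le\dim\A_g-g$.

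The main obstacle is therefore exactly the obstacle the authors identify for Conjectures~\ref{co1} and~\ref{dthetanull} themselves, namely the Keel--Sadun-type hypothesis that every irreducible component of $(\p^2\t)_{\rm null}^{(k)}$ (respectively $(\p\t)_{\rm null}^{(k)}$) meets the partial-compactification boundary $\p\A_k$ for every $k$ in the relevant range. Establishing this boundary-meeting property unconditionally would yield Conjecture~\ref{co1} (resp.\ Conjecture~\ref{dthetanull}), and the elementary semicontinuity argument above would then upgrade it to Conjecture~\ref{co3} with no further work. A secondary point to verify carefully is that no irreducible component of $\Ts$ has ``unexpectedly'' large generic fibre dimension exceeding $\dim\A_g-\dim V_i-g$, but this is automatic from either of the conjectural inputs.
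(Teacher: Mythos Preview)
The statement in question is a \emph{conjecture}; the paper does not prove it unconditionally. What the paper does prove is the proposition immediately following it: Conjecture~\ref{co1} (that $\codim_{\X_g}\Ts=2g$) is equivalent, for each fixed $g$, to Conjecture~\ref{co3} for all $k$. Your first paragraph reproduces the forward implication of that proposition by essentially the same mechanism: the paper writes the one-line inequality $\dim\Ts\ge k+\dim G_k$ (since over $G_k$ the fibres of $\pi|_{\Ts}$ have dimension $\ge k$), and you unpack this component by component via semicontinuity. So on the main point your argument is correct and matches the paper's.

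Your second paragraph, however, has a genuine gap. You assert that ``on each irreducible component of $G_0$ a generic ppav has only finitely many points of multiplicity $\ge 3$ on its theta divisor'', i.e.\ that no component of $G_0$ lies inside $G_1$. This is not an observation but precisely an instance of the conjecture you are trying to establish (the case $k=1$ would force any such component to have codimension $\ge g+1$, contradicting its being a component of $G_0$ if $\codim G_0=g$). Conjecture~\ref{dthetanull} alone says nothing about $G_1$ relative to $G_0$. Moreover, even if one grants this assertion, your next sentence---``each irreducible component of $\Ts$ then projects to a component of $G_0$ with generic fibre dimension zero''---does not follow: a component $T_i$ of $\Ts$ may well project to a \emph{proper} closed subset $V_i\subsetneq C$ of a component $C$ of $G_0$, and over $V_i$ the generic fibre of $T_i\to V_i$ could still be positive-dimensional. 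So the claimed reduction to Conjecture~\ref{dthetanull} alone does not go through; one really needs the full strength of Conjecture~\ref{co1}, which is exactly the paper's position.
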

\begin{rem}
Notice that we know that the locus $(\p\t)_{\rm null}$ is by definition a subset of $G_0$; however, $(\p\t)_{\rm null}$ is locally given by the vanishing of a gradient of an odd theta function with characteristic at zero, i.e. is given by $g$ equations, and thus we have $g\le\codim (\p\t)_{\rm null}\le\codim G_0$, so the above conjecture implies that $\codim G_0=\codim (\p\t)_{\rm null}=g$.
\end{rem}

\begin{rem}
In fact it seems likely that the locus $(\p\t)_{\rm null}$ is of pure codimension $g$, while it seems less likely that $G_0$ is of pure dimension: the second derivatives of theta automatically vanish at odd points of order two, but not generically on $\X_g$, and thus it is natural to expect that there exist components of $G_0\setminus(\p\t)_{\rm null}$ are of codimension higher than $g$.
\end{rem}
\begin{rem}\label{gminus3}
In the notations of \cite{amsp} the loci $N_{g,k,3}=G_k^{(g)}$ are studied, and the following results are know for them. Theorem 8.6(ii) in \cite{amsp} then says that $\codim G_k\ge k+3$, while theorem 7.5(ii) in \cite{amsp} in our notations is that $$G_{g-3}^{(g)}=\mathop{\bigcup}\limits_{g_1+g_2+g_3=g,
\ g_1g_2g_3>0}\A_{g_1}\times\A_{g_2}\times\A_{g_3},$$
which is not purely dimensional, but has codimension $2g-3$ --- so that conjecture \ref{co3} is true for $k=g-3$.
\end{rem}

The statement  about the codimension of $G_k$ seems to be   rather convincing, but we could not find a proof.  Note that the argument to prove $\codim N_1>1$ given by Mumford in \cite{mu} uses an involved heat equations argument, while Ciliberto and van der Geer in \cite{cilvdg}, \cite{amsp} show, with a lot of work, that $\codim N_k\ge k+2$. The basic question seems to be whether the locus $\Ts$ is pure-dimensional or not, and the relationship of the conjecture is the following
\begin{prop}
Conjecture \ref{co1}, for a fixed $g$, is equivalent to conjecture \ref{co3}, for the same $g$, and all $k$.
\end{prop}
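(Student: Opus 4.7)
The plan is to compare the stratification of $\Ts\subset\X_g$ by the fiber dimension of $\pi:\Ts\to\A_g$ with the filtration $G_0\supseteq G_1\supseteq\dots$ on $\A_g$, using only the fiber-dimension formula for $\pi$ together with the \emph{a priori} inequality $\codim_{\X_g}\Ts\le 2g$ already recorded in the excerpt. That inequality is obtained as follows: since $\pi(\Ts)\supseteq(\p\t)_{\rm null}$, and $(\p\t)_{\rm null}$ is locally cut out by the $g$ components of a single odd gradient $\grad\tt\e\d$, we have $\codim_{\A_g}(\p\t)_{\rm null}\le g$; over a general point of $(\p\t)_{\rm null}$ the fiber of $\pi$ contains the distinguished odd 2-torsion point, so $\dim\Ts\ge\dim\A_g-g$, giving $\codim_{\X_g}\Ts\le 2g$. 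Note also that $\pi$ is proper as the restriction of $\pi':\X_g\to\A_g$, so $G_k=\pi(\{x\in\Ts:\dim_x\pi^{-1}(\pi(x))\ge k\})$ is closed in $\A_g$.

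For the direction \ref{co1}$\Rightarrow$\ref{co3}, I would take an arbitrary irreducible component $Y$ of $G_k$. By the definition of $G_k$ (equivalent, via the heat equation, to multiplicity $\ge 2$), the preimage $\pi^{-1}(Y)\cap\Ts$ has fiber dimension $\ge k$ over a dense open subset of $Y$, so it contains an irreducible subvariety $Z$ with $\dim Z\ge\dim Y+k$. The hypothesis $\dim\Ts=\dim\A_g-g$ then forces $\dim Y+k\le\dim\A_g-g$, i.e.\ $\codim_{\A_g}Y\ge g+k$; since this holds for every component and every $k$, conjecture \ref{co3} follows.

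For the converse \ref{co3}$\Rightarrow$\ref{co1}, let $Z$ be an arbitrary irreducible component of $\Ts$, and let $k$ denote the dimension of the generic fiber of $\pi|_Z:Z\to\pi(Z)$. By upper semi-continuity of fiber dimension, $\pi(Z)\subseteq G_k$, so the hypothesis gives $\dim\pi(Z)\le\dim\A_g-g-k$. The fiber-dimension formula $\dim Z=\dim\pi(Z)+k$ then yields $\dim Z\le\dim\A_g-g$, that is, $\codim_{\X_g}Z\ge 2g$. Since every component satisfies this bound, $\codim_{\X_g}\Ts\ge 2g$, and the a priori upper bound from the first paragraph upgrades this to the desired equality.

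There is no really hard step here; the content of the proposition is essentially the fiber-dimension formula applied in both directions. The only points that demand a moment of attention are (i) using the a priori upper bound $\codim\Ts\le 2g$ in the reverse direction, without which \ref{co3} could in principle hold vacuously with $\codim\Ts$ strictly larger than $2g$, and (ii) matching the integer $k$ on the two sides — in the forward direction $k$ labels the component of $G_k$ and produces a subvariety of $\Ts$ of dimension $\ge\dim Y+k$ via semi-continuity, while in the reverse direction $k$ is the generic fiber dimension of $\pi|_Z$ and produces a containment $\pi(Z)\subseteq G_k$.
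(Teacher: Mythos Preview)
Your argument is correct and follows essentially the same route as the paper: both directions use the fiber-dimension formula for $\pi|_{\Ts}$ together with the a priori bound $\codim_{\X_g}\Ts\le 2g$ coming from $(\p\t)_{\rm null}$. Your version is somewhat more careful in spelling out the use of irreducible components and upper semi-continuity, but there is no substantive difference in strategy.
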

\begin{proof}
Indeed recall that by definition the map $\pi$ restricted to $\Ts\cap\pi^{-1}(G_k)$ has fiber dimension $k$, and thus $\dim\Ts\ge k+\dim G_k$. Thus if we know that $\codim_{\X_g}\Ts=2g$, it follows that $\codim_{\A_g}G_k\ge g+k$. In the other direction, if $\codim G_k\ge g+k$ for all $k$, we have $\dim\Ts=\max_k(k+\dim G_k)\le \max_k(k+\dim{\A_g}-g-k)$, and thus $\codim\Ts\ge 2g$, in which case it must be equal to $2g$.
\end{proof}

Now we would like to give some evidence for the validity of these conjectures. Note that if $z=(\tau\e+\de)/2\in X_\tau[2]$ is an odd point, then $0=\vartheta(\tau,z)=\p_i\p_j\vartheta(\tau,z)$ automatically as the value and derivatives of an odd function, and thus the locus
$$
 Y:=\{(\tau,z)\in\X_g \mid \grad \vartheta(\tau,z)=0;\ z=(\tau\e+\de)/2\}
$$
is a subset of $\Ts\subset\X_g$, while the projection $\pi(Y)=(\p\t)_{\rm null}\subset\A_g$ (notice that by definition $(\p\t)_{\rm null}$ is the union of such projections over all odd $[\e,\de]$, but they are all conjugate under $\Sp$, and thus have the same image on $\A_g$). The expected codimension of $Y$ in $\X_g$ is equal to $2g$ ($g$ for fixing a point on $X_\tau$, and $g$ for the vanishing of the gradient).
\begin{thm}\label{tch}
Let $Z$ be a reduced irreducible component of $\Ts$ that is contained in $Y$ as above. Then $Z$ has codimension $2g$ in $\X_g$ and $\pi(Z)$ has codimension $g$ in $\A_g$ (and is thus an irreducible component of $(\p\t)_{\rm null}$).
\end{thm}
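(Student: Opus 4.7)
The idea is to identify $Y$ as the image of the section $\sigma_{\e,\de}:\A_g\hookrightarrow\X_g$, $\tau\mapsto(\tau,(\tau\e+\de)/2)$, restricted to the locus $N^{\e,\de}:=\{\tau\in\A_g\mid\grad\tt\e\de(\tau)=0\}$; thus $Y$ is cut in $\X_g$ by exactly $2g$ equations (the $g$ linear section equations together with the $g$ gradient equations $\p_i\vartheta=0$), and $\pi|_Y$ is a bijection onto $N^{\e,\de}$. Consequently $\codim_{\X_g}W=g+\codim_{\A_g}\pi(W)$ for any subvariety $W\subseteq Y$, so the two codimension assertions in the theorem are equivalent. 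The fact that $Z$ is a component of $\Ts$ inside $Y$ forces $Z$ to be a component of $Y$ as well: any irreducible $W$ with $Z\subsetneq W\subseteq Y$ would contradict the maximality of $Z$ in $\Ts$. Krull's height theorem applied to the $2g$ equations cutting $Y$ then gives the upper bound $\codim_{\X_g}Z\le 2g$.

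For the matching lower bound, I compute the Jacobian $J$ of the defining equations $\vartheta,\p_i\vartheta,\p_i\p_j\vartheta$ of $\Ts$ at a point $p=(\tau_0,z_0)\in Y$. Since $\vartheta$ and $\p_i\p_j\vartheta$ vanish along the entire odd section (the shifted theta $\tt\e\de$ is odd in $z$, so its value and all second $z$-derivatives at $z=0$ vanish), while $\p_i\vartheta$ vanishes on $Y$ by the gradient condition, and using the heat equation to rewrite $\p\vartheta/\p\tau_{ij}$ in terms of $\p_i\p_j\vartheta$, the Jacobian takes the block form
$$
 J|_p=\pmatrix 0 & 0 \\ M_3 & 0 \\ M_4 & M_3^T \endpmatrix,
$$
where $M_3=(\p_i\p_j\p_k\vartheta(p))$ is the $g\times\binom{g+1}{2}$ third-derivative matrix (rows indexed by $i$, columns by pairs $(j,k)$) and $M_4$ is the corresponding fourth-derivative matrix. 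A direct count of $\dim\ker J|_p$ yields $\operatorname{rank}(J|_p)\ge 2\operatorname{rank}(M_3|_p)$, and the ``pure $z$-directions'' in $\ker J|_p$, i.e.\ vectors of the form $(0,\d z)\in\ker J|_p$, form $\{0\}\times\ker(M_3^T|_p)$, a subspace of dimension $g-\operatorname{rank}(M_3|_p)$.

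At a generic smooth point $p\in Z$, $Z$ is the unique component of $\Ts$ through $p$, so locally $\Ts^{\rm red}=Z$ and $T_pZ$ agrees with the tangent space of the underlying reduced variety. On the other hand $T_pZ\subseteq T_pY\subseteq T_p\sigma_{\e,\de}(\A_g)$, and every tangent vector to the section has the form $(\d\tau,(\d\tau)\e/2)$, which has no nonzero pure $z$-component. Identifying $T_pZ$ with $\ker J|_p$ thus forces $\ker(M_3^T|_p)=0$, i.e.\ $\operatorname{rank}(M_3|_p)=g$, and hence $\operatorname{rank}(J|_p)\ge 2g$; since $\dim Z\le\dim\ker J|_p=\dim\X_g-\operatorname{rank}(J|_p)\le\dim\X_g-2g$, we obtain $\codim_{\X_g}Z\ge 2g$. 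Together with the upper bound this gives $\codim_{\X_g}Z=2g$ and $\codim_{\A_g}\pi(Z)=g$, and since $(\p\t)_{\rm null}$ is cut by the $g$ components of $\grad\tt\e\de$, $\pi(Z)$ must be an irreducible component of $(\p\t)_{\rm null}$ by Krull. The main technical subtlety is the identification of $T_pZ$ with $\ker J|_p$ at a generic smooth $p\in Z$, which requires $\Ts$ to be reduced at $p$; the fallback is to argue that a nonzero pure $z$-direction in the Zariski tangent space of the scheme $\Ts$ at $p$ would produce a first-order deformation of $p$ inside $\Ts$ leaving $\sigma_{\e,\de}(\A_g)$, hence an irreducible subset of $\Ts$ strictly containing $Z$ and violating its maximality.
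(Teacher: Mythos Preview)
Your argument is correct and follows essentially the same strategy as the paper: compute the Jacobian of the defining equations of $\Ts$ at a generic smooth point $p\in Z$, use the hypothesis that $Z$ is a \emph{reduced} component of $\Ts$ to identify $T_pZ$ with $\ker J|_p$, and exploit $T_pZ\subseteq T_pY\subseteq T_p\sigma_{\e,\de}(\A_g)$ to force the third-derivative matrix $M_3$ to have full rank $g$. Your use of Krull's height theorem for the upper bound and your observation that $Z$ must also be a component of $Y$ are exactly the ingredients the paper leaves implicit.

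Your ``pure $z$-direction'' formulation is in fact slightly sharper than the paper's. The paper asserts $\dim T_p\Ts=\tfrac{g(g+1)}{2}-g+2k$ when $\operatorname{rk}M_3=g-k$ and then equates this with $\dim T_pY=\tfrac{g(g+1)}{2}-g+k$ to conclude $k=0$; but the first equality tacitly requires that the fourth-derivative block $M_4$ contributes nothing to $\operatorname{rk}J$ (true at an odd two-torsion point by parity, yet an extra check). Your route avoids this: you only need $\operatorname{rk}J\ge 2\operatorname{rk}M_3$, together with the observation that the pure $z$-part of $\ker J|_p$ is $\ker M_3^T$, which must vanish once $\ker J|_p\subseteq T_p\sigma_{\e,\de}(\A_g)$.

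The one genuine flaw is your fallback argument at the end. A nonzero Zariski tangent vector to the scheme $\Ts$ at $p$ that points out of $\sigma_{\e,\de}(\A_g)$ does \emph{not} produce an irreducible closed subset of $\Ts$ strictly containing $Z$: tangent vectors to a non-reduced scheme need not integrate to curves in the underlying reduced variety, so no contradiction with the maximality of $Z$ follows. Fortunately the fallback is unnecessary: the phrase ``reduced irreducible component'' in the statement is precisely the hypothesis that $\Ts$ is generically reduced along $Z$, which is what both your main argument and the paper's proof use to justify $T_pZ=\ker J|_p$.
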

\begin{proof}
We apply  the jacobian criterion, in a smooth point  of $Z$. As in the theorem \ref{reduc}, the dimension of the tangent space to $\Ts$ is $(1/2)g(g+1)-g+2k$ if $rk (\p_i\p_j\p_k\vartheta)=g-k .$

On the other hand, the normal space to $Y$ at the same point is given by the matrix
$$
  \left(\begin{array}{cc }
  0&1_g\\
  (\p_i\p_j\p_k\vartheta)'&M(\e)
  \end{array}\right)
$$
with $M(\e)$  a $(1/2)g(g+1)\times g$   matrix  depending on $\e$. This matrix has rank $2g-k$, hence the dimension of the tangent space is $(1/2)g(g+1)-g+k$. Since the  two computations    give the same result , we get $k=0$.  We get that the codimension of $\pi(Y)$ is at least $g$. Since on a suitable covering of $\A$ we have that this locus is defined by the equations
$$
 \p_i \tt \e\d(\tau, 0)=0,
$$
We have that the codimension is at most $g$, hence it is exactly $g$.
\end{proof}

\end{document}